\begin{document}
		
		\newtheorem{definition}{Definition}[section]
		\newtheorem{theorem}{Theorem}[section]
		\newtheorem{lemma}{Lemma}[section]
		\newtheorem*{T. lemma}{Technical lemma }
		\newtheorem{proposition}{Proposition}[section]
		\numberwithin{equation}{section}
		\newtheorem{remark}[theorem]{Remark}
		
		\title[ ]{Multiplicity of solutions for a class of elliptic problem of $p$-Laplacian type  with a $p$-Gradient term}
		
	\author{Zakariya Chaouai}
	\email{z.chaouai@gmail.com}
	\author{Soufiane Maatouk}
	\email{sf.maatouk@gmail.com}
	\address{Center of Mathematical Research and Applications of Rabat (CeReMAR), Laboratory of Mathematical Analysis and Applications (LAMA), Department of Mathematics, Faculty of Sciences, Mohammed V University, P.O. Box 1014, Rabat, Morocco.}	
		%    \subjclass is required.
		\subjclass[2010]{Primary 35J66, 35A15, 	35B38 }
\begin{abstract}
	 We consider the following problem
		$$(P) 
		\begin{cases}
		-\Delta_{p}u= c(x)|u|^{q-1}u+\mu |\nabla u|^{p}+h(x) & \ \ \mbox{ in }\Omega,\\
		u=0 & \ \ \mbox{ on }   \partial\Omega,  
		\end{cases}$$
	where $\Omega$ is a bounded set in $\mathbb{R}^{N}$ ($N\geq 3$) with a smooth boundary, $1<p<N$, $q>0$, $\mu \in \mathbb{R}^{*}$, and $c$ and $ h$ belong to $L^{k}(\Omega)$ for some $k>\frac{N}{p}$. In this paper, we assume that  $c\gneqq 0$ a.e. in $\Omega$ and $h$ without sign condition, then 
	 we prove the existence of at least two bounded solutions  under the condition that  $\|c\|_{k}$ and $\|h\|_{k}$ are suitably small. For this  purpose, we use the Mountain Pass theorem, on an equivalent problem to $(P)$  with variational structure. Here, the main difficulty  is that the nonlinearity term considered does not satisfy Ambrosetti and Rabinowitz condition. The key idea   is to replace the former condition by the   \textbf{nonquadraticity condition at infinity}.
\end{abstract}
\maketitle
\section{Introduction and main result}
	Let $\Omega$ be a bounded set in $\mathbb{R}^{N}$ ($N\geq 3$)  with a smooth boundary $\partial \Omega$.	In this paper, we are concerned with  the following  elliptic problem
	$$(P) 
	\begin{cases}
	-\Delta_{p}u= c(x)|u|^{q-1}u+\mu |\nabla u|^{p}+h(x) & \ \ \mbox{ in }\Omega,\\
	u=0 & \ \ \mbox{ on }   \partial\Omega,  
	\end{cases}$$
	where $\Delta_{p}u:=\mbox{div}(|\nabla u|^{p-2}\nabla u)$ is the $p$-Laplacian operator, $1<p<N$, $q>0$, $\mu \in \mathbb{R}^{*}$, and $c$  and $ h$ belong to $L^{k}(\Omega)$ for some $k>\frac{N}{p}$.  
	
	In the literature, there are many results concerning the existence,  the uniqueness, and the multiplicity of solutions for  models like  $(P)$ under various assumptions on $c$ and $h$. At first, it is important to  mention that the sign of $c$ plays a crucial role in the problem $(P)$ regarding uniqueness, as well as existence, of bounded solutions. In this setting, we refer to  (\cite{Sirakov2}) for more details.    In the coercive case, that is $c(x)\leq -\alpha_{0}$ a.e. in $\Omega$\; for some $\alpha_{0}>0$, Boccardo, Murat and Puel (\cite{Boccardo,Puel,Murat}), proved the existence of bounded solutions for more general divergence form problems with quadratic growth in the gradient by using the sub and supersolution method. Moreover, Barles and Murat ([6]) and Barles et at. ([5]) have treated the uniqueness question for similar problems.  Notice that, if we allow  $c(x)\leq 0$ a.e. in $\Omega$, then Ferone and Murat (\cite{Ferone-Murat1},\cite{Ferone-Murat2}) observed that finding solutions to  $(P)$ becomes rather complex without  imposing some strong regularity conditions on the data. For the particular case  $c\equiv 0$, there had been many contributions (\cite{Abdellaoui, Maderna, Poretta}). However, for $c\leq 0$ that  may vanish only on some parts of $\Omega$, the uniqueness of solutions was left open until the recent paper  authored by Arcoya et at. (\cite{Coster}). This last result was proved for $p=2,$ $ q=1$, and under the following condition
	$$\begin{cases}
		c, h \mbox{ belong to } L^{k}(\Omega) \mbox{ for some } k>\frac{N}{2}, \mu \in L^{\infty}(\Omega) \mbox{ and } meas(\Omega \backslash Supp\; c)>0,&\\
	\displaystyle {\inf_{u\in W_{c}, \|u\|_{H^{1}_{0}(\Omega)}}}\int_{\Omega}\left(|\nabla u|^{2}-\|\mu^{+}\|_{L^{\infty}(\Omega)}h^{+}(x)u^{2}\right)>0, & \\
	\displaystyle {\inf_{u\in W_{c}, \|u\|_{H^{1}_{0}(\Omega)}}}\int_{\Omega}\left(|\nabla u|^{2}-\|\mu^{-}\|_{L^{\infty}(\Omega)}h^{-}(x)u^{2}\right)>0.
	\end{cases}$$
	where $W_{c}:=\{w\in H^{1}_{0}(\Omega): \; c(x)w(x)=0, \mbox{ a.e. in }  \Omega \}$. For a related uniqueness result see also Arcoya et at. (\cite{Coster2}).  
	
	The case where  $c(x)\gneqq 0$ a.e. in $\Omega$, the question of non-uniqueness  has been being an open problem  given by Sirakov (\cite{Sirakov}) and it has received considerable attention by many authors. Moreover, it should be pointed out  that the sign of $h$ and whether $\mu$ is a function or  a constant, generate additional difficulties for solving $(P)$. In this setting,  Jeanjean and  Sirakov (\cite{Sirakov2})  showed   the existence of two bounded solutions assuming that $\mu \in \mathbb{R}^{\ast}$, $c$ and  $h$ are in  $L^{k}(\Omega)$ for some  $ k>\frac{N}{2}$  and satisfying 
	$$
\|[\mu h]^{+}\|_{L^{\frac{N}{2}}(\Omega)}<C_{N},$$
$$
\max \{\|c\|_{L^{k}(\Omega)},\|[\mu h]^{-}\|_{L^{k}(\Omega)} \}<\bar{c},
$$
where $\bar{c}>0$ depends only on $N, k, meas(\Omega), |\mu|, \|[\mu h]^{+}\|_{L^{k}(\Omega)}$, and $C_{N}$ is the optimal constant in Sobolev's inequality. Here, $h$ is allowed to change sign. Shortly after, this result was extended by Coster and Jeanjean (\cite{Coster3}) for $\mu$ is a bounded function such that $\mu(x)\geq \mu_{1}>0$ by using   the degree topological method.
	
	Finally, in  the case where $c$ is allowed to change sign and  with $c(x)\gneqq 0$ a.e. in $\Omega$, Jenajean and Quoirin (\cite[Theorem 1.1]{Quoirin})  showed the existence of two bounded positive solutions  when $h\gneqq 0$, $\mu$ is a positive constant, and   $c^{+}$ and $\mu h$ are suitably small. 
	
	We would  also like to mention that all the  above quoted multiplicity results  were restricted to the Laplacian operator with quadratic growth in the gradient, i.e. $p=2$,  and for $q=1$. Moreover, it is  interesting to mention that when $c$ is allowed to change sign the solutions  are  positive. \\
	
	  In this work, we prove the multiplicity of bounded solutions for the problem $(P)$ by assuming  the following assumption 
	  	$$(H) 
	  	\begin{cases}
	  	c, h \mbox{ belongs to } L^{k}(\Omega) \mbox{ for some } k>\frac{N}{p}, h \mbox{ is allowed to change sign}, & \\
	  	 c\gneqq 0 \mbox{ a.e. in } \Omega, q>0, \mbox{ and } \mu \in \mathbb{R}^{*}. &    
	  	\end{cases}$$

	Now, we give a brief exposition of the proof of our multiplicity result. At first, without loss of generality, we solve the problem $(P)$ by  restricting it to the  case $\mu$ is a positive constant. For $\mu$ is a negative constant, we replace $u$ by $-u$ in $(P)$, then we conclude. Next, we observe that the problems of type $(P)$  do not have a variational formulation due to the presence of the $p$-gradient term. To overcome  this difficulty, we perform the Kazdan-Kramer change of variable, that is, $v=(e^{\frac{\mu u}{p-1}}-1)/\mu$.
	  	Thus, we obtain  the following equivalent problem $(P^{\prime})$  
	  	$$(P^{\prime}) 
	  	\begin{cases}
	  	-\Delta_{p}v= c(x)g(v)+ h(x)f(v) & \ \ \mbox{ in }\Omega,\\
	  	v=0 & \ \ \mbox{ on }   \partial\Omega,  
	  	\end{cases}$$ 
	  	where 
	  	\begin{equation}\label{function g}
	  g(s)=\frac{(p-1)^{q-p+1}}{\mu^{q}}(1+\mu s)^{p-1}|\ln(1+\mu s)|^{q-1}\ln(1+\mu s),\; \mbox{ with } s>\frac{-1}{\mu},
	  	\end{equation} 
	  	and 
	  	\begin{equation}\label{f}
	  	f(s)=\frac{(1+\mu s)^{p-1}}{(p-1)^{p-1}}.
	  	\end{equation}

	We mean by bounded weak solutions of  $(P^{\prime})$, the functions $v\in W_{0}^{1,p}(\Omega) \cap L^{\infty}(\Omega) $ satisfying
	$$\int_{\Omega}|\nabla v|^{p-2}\nabla v \nabla u=  \int_{\Omega}c(x)g(v)u+ \int_{\Omega}h(x)f(v)u, $$
	 for any $ u\in W_{0}^{1,p}(\Omega) \cap L^{\infty}(\Omega).$ Obviously,  if  $v>\frac{-1}{\mu}$ is a solution of  $(P^{\prime})$, then $u=\frac{p-1}{\mu}\ln (1+\mu v)$ is a solution of $(P)$. Hence, the solutions obtained here are not necessarily positive (compare with (\cite{Quoirin})).

	  One of the most fruitful ways to deal with $(P^{\prime})$ is the variational method, which takes into account that the weak solutions of  $(P^{\prime})$ are critical points in $W_{0}^{1,p}(\Omega)$  of the $C^{1}$-functional
	 
	 \begin{equation}\label{func}
	 I(v)=\frac{1}{p}\int_{\Omega}|\nabla v|^{p}-\int_{\Omega}c(x)G(v)-\int_{\Omega}h(x)F(v),
	 \end{equation}
	with $G(s)=\int_{0}^{s}g(t)dt$ and  $F(s)=\int_{0}^{s}f(t)dt$.

    In this work, to obtain the two critical points for $I$, we use the Mountain Pass Theorem  to show one critical point and  the standard lower semicontinuity argument to show the other. For the first one, according to the famous paper  by Ambrosetti and Rabinowitz (\cite{AR}), the most important step is to show that $I$ satisfies the  Palais-Smale  condition at the level $\tilde{c}$ (see Definition \ref{ Palais-Smale condition}). The fulfillment of this condition   relies on the well-known   Ambrosetti-Rabinowitz condition (($AR_{c}$) for short), namely
	$$\mbox{ there exist }  \; \theta >p \; \mbox{ and } \; s_{0}>0 \;  \mbox{ such that }\;   0<\theta G(s)\leq sg(s),\;   \mbox{ as } \;  |s|>s_{0}.$$
	
	Unfortunately, this condition is somewhat restrictive and not being satisfied by many nonlinearities $g$.  However,  many researches have been made to drop the $(AR_{c})$. We refer, for instance,  to  \cite{Costa, Zou, Miyagaki, Marcelo, Ubilla}. Notice that, the  nonlinearity $g$ considered here  does not satisfy  ($AR_{c}$). Moreover, since we do not assume any sign condition on $h$, the fulfillment of the Palais-Smale condition turns out  more delicate (see eg. \cite{Chile, Quoirin}). To the best of our knowledge, only Jenajean and Quoirin (\cite{Quoirin}),  recently, proved the Palais-Smale condition under the assumptions  $c$ changes  sign,  $h$ is positive, and without assuming ($AR_{c}$). In their proof, for $p=2$ and $q=1$, the authors based one of the arguments on the positivity of $h$ and the explicit determination of a function $H$;
	$$H(s)=g(s)s-2G(s).$$ 
   In our situation, as $h$ is allowed to change sign and the analog of their function $H$  can not be computed explicitly, due to our general consideration of $p$ and $q$ ($1<p<N$ and $q>0$), hence, their arguments can not be adapted. 
 
 The key point to show  the Palais-Smale condition in this paper is to   prove that $g$, among other conditions, satisfies the  following (see Lemma \ref{(NQ)}), 
 $$(NQ)\ \;  \; \; \; H(s)=g(s)s-pG(s)\to +\infty, \;\mbox{ where }\; s\to +\infty. $$
 
 The condition $(NQ)$ is a variant of the  well known  \textbf{nonquadraticity condition at infinity}, which  was introduced by Costa and Malgalh\~{a}es (\cite{Costa}), and is given as follows
 $$(CM) \; \; \mbox{ there exist  } a>0 \mbox{ and } \nu \geq \nu_{0}>0 \mbox{ such that } \liminf_{|s|\to \infty} \frac{H(s)}{|s|^{\nu}}\geq a.$$
 
  Observe that, since $\nu>0$,  then $(NQ)$ is weaker than $(CM)$. Moreover, it should be noted that $(NQ)$  was considered by Furtado and Silva in their recent paper (\cite{Marcelo}). Our result follows by using similar arguments.\\

  Concerning the existence of the second critical point handled by the standard lower semicontinuity argument,  we look for a local minimum in $W_{0}^{1,p}(\Omega)$ for the functional $I$. Indeed, we observe that $I$ takes positive values in a large sphere, due to its geometrical structure (see Proposition \ref{goem structural}), and $I(0)=0$.\\

  Now we state the  main result of this paper
  \begin{theorem}\label{main result}
  Assume that $(H)$ is satisfied. If  $\|c\|_{k}$ and $\|h\|_{k}$ are suitably small, then the functional $I$ has at least two critical points. Hence,  the problem $(P)$ has at least two bounded weak solutions.
  \end{theorem}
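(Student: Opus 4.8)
The plan is to work with the transformed problem $(P')$ and its $C^1$ energy $I$ from \eqref{func}, after reducing to $\mu>0$ as explained above, and to exhibit two distinct critical points of $I$ on $W_0^{1,p}(\Omega)$, equipped with the norm $\|v\|:=\|\nabla v\|_{L^p(\Omega)}$, one as a local minimum and one of mountain pass type. First I would read off the geometry of $I$. Since $f(0)=1/(p-1)^{p-1}\neq0$ while $g(0)=G(0)=0$, near the origin $F(s)=f(0)s+o(s)$ and $G(s)=O(|s|^{q+1})$, so along any direction $w$ with $\int_\Omega hw>0$ the linear term $-f(0)\,t\int_\Omega hw$ dominates $t^p\|w\|^p/p$ for small $t>0$; hence $I$ takes strictly negative values arbitrarily close to $0$, although $I(0)=0$. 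At the other end, because $c\gneqq0$ and $G(s)\sim s^p(\ln s)^q$ grows faster than $s^p$, one has $I(tw)\to-\infty$ as $t\to+\infty$ for every $w$ not vanishing on the support of $c$. The role of the smallness of $\|c\|_k$ and $\|h\|_k$ is to insert a barrier between these two downhill regions: by Proposition \ref{goem structural} there are $R,\beta>0$ with $I(v)\geq\beta$ whenever $\|v\|=R$.

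With this in hand the two critical points come out as follows. For the local minimum I would minimize $I$ over the closed ball $\overline{B_R}$: the lower-order terms are weakly continuous by the compact embedding $W_0^{1,p}(\Omega)\hookrightarrow L^{r}(\Omega)$, $r<p^\ast$, together with the subcritical growth of $F,G$, so $I$ is sequentially weakly lower semicontinuous and the infimum over $\overline{B_R}$ is attained at some $v_1$. Because $\inf_{\overline{B_R}}I<0$ (the negative values near $0$ lie well inside $B_R$) while $I\geq\beta>0$ on $S_R$, the minimizer $v_1$ lies in the interior of the ball and is therefore a nontrivial critical point with $I(v_1)<0$. For the second point I would invoke the Mountain Pass Theorem with base points $0$ and a far point $e$, $\|e\|>R$ and $I(e)<0$, chosen on one of the rays along which $I\to-\infty$. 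Every path from $0$ to $e$ must meet $S_R$, where $I\geq\beta$, while $\max\{I(0),I(e)\}=0<\beta$; hence the minimax level $\tilde c\geq\beta>0$, and, granting the Palais--Smale condition, this produces a critical point $v_2$ with $I(v_2)=\tilde c>0$. As $I(v_2)>0>I(v_1)$, the two solutions are automatically distinct.

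The hard part will be the Palais--Smale condition, precisely where $(AR_{c})$ fails and where the sign-changing $h$ destroys the usual estimates; here I would follow the nonquadraticity route of \cite{Marcelo} using Lemma \ref{(NQ)}. Given a sequence with $I(v_n)$ bounded and $I'(v_n)\to0$, I would first establish boundedness from the identity
\[
pI(v_n)-I'(v_n)v_n=\int_\Omega c\,H(v_n)+\int_\Omega h\,\bigl(f(v_n)v_n-pF(v_n)\bigr),
\]
whose left-hand side is $O(1)+o(\|v_n\|)$, whereas $(NQ)$ forces $\int_\Omega c\,H(v_n)$ to blow up if $\|v_n\|\to\infty$; a normalization argument (set $w_n=v_n/\|v_n\|$) then rules this out, the $h$-term being of strictly lower order since $f(s)s-pF(s)=O(|s|^{p-1})$. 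Once $(v_n)$ is bounded I would pass to a weak limit $v_n\rightharpoonup v$, use the compact embedding and $c,h\in L^k(\Omega)$ with $k>N/p$ to show the lower-order terms converge, and conclude strong convergence $v_n\to v$ by the $(S_+)$ property of the $p$-Laplacian. The delicate points are the uniform integrability of the terms carrying $g$ and $f$ and keeping every constant compatible with the smallness of $\|c\|_k,\|h\|_k$.

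Finally I would upgrade $v_1,v_2$ to bounded solutions by the standard Stampacchia--Moser regularity for the $p$-Laplacian with right-hand side in $L^k(\Omega)$, $k>N/p$, and check that each satisfies $v_i>-1/\mu$, so that the relevant range of $g,f$ is respected. The inverse change of variable $u_i=\tfrac{p-1}{\mu}\ln(1+\mu v_i)$ then yields two distinct bounded weak solutions of $(P)$, proving Theorem \ref{main result}.
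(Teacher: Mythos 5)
Your overall architecture coincides with the paper's: Kazdan--Kramer transformation to $(P')$, the geometry of Proposition \ref{goem structural}, a local minimum obtained by minimizing $I$ over the closed ball (weak lower semicontinuity), a mountain pass point at level $\tilde c\geq\beta>0$, distinctness from the strict separation of energy levels, and $L^{\infty}$ regularity at the end. The genuine gap is in the Palais--Smale boundedness step, which is precisely where the paper does its real work. You start from the correct identity
$$pI(v_{n})-\langle I^{\prime}(v_{n}),v_{n}\rangle=\int_{\Omega}c\,H(v_{n})+\int_{\Omega}h\,K(v_{n}),\qquad K(s)=f(s)s-pF(s),$$
whose left-hand side is indeed $O(1)+o(\|v_{n}\|)$, but then assert that $(NQ)$ ``forces $\int_{\Omega}c\,H(v_{n})$ to blow up'' when $\|v_{n}\|\to\infty$, the $h$-term being of strictly lower order. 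Neither claim follows from $(NQ)$. First, $H(s)\to+\infty$ only as $s\to+\infty$, so to get blow-up of $\int_{\Omega}c\,H(v_{n})$ you would need $v_{n}\to+\infty$ pointwise on a positive-measure subset of $\{c>0\}$. The normalization $w_{n}=v_{n}/\|v_{n}\|$ does not provide this: in the hypothetical unbounded case the paper proves (via the equation, test functions and Fatou) that the weak limit of $w_{n}$ is necessarily $\equiv 0$, so $v_{n}$ need not tend to $+\infty$ anywhere and $\int_{\Omega}c\,H(v_{n})$ may stay bounded; the case $w\equiv 0$ is exactly the hard case, not one that is ``ruled out''. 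Second, even granting blow-up, $(NQ)$ carries no rate: since $h$ changes sign, $\lvert\int_{\Omega}h\,K(v_{n})\rvert$ is only $O(\|v_{n}\|^{p-1})$ with no favorable sign, so divergence of $\int_{\Omega}c\,H(v_{n})$ without a quantitative lower bound of order $\|v_{n}\|^{p-1}$ (or $\|v_{n}\|$ when $p<2$) produces no contradiction. Such a rate is available under the Costa--Magalh\~aes condition $(CM)$, but precisely not under the weaker $(NQ)$ used here; ``lower order than a quantity with no growth rate'' is not a meaningful comparison.

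This is why the paper does not argue as you do, but follows Furtado--Silva \cite{Marcelo}: assuming $\|u_{n}\|\to\infty$, it introduces $z_{n}=t_{n}u_{n}$ with $t_{n}$ maximizing $I(tu_{n})$ on $[0,1]$, proves $I(z_{n})\to+\infty$ (here the vanishing of the normalized weak limit is used), and then proves $I(z_{n})$ is bounded by splitting into $t_{n}\leq 2/\|u_{n}\|$ and $t_{n}>2/\|u_{n}\|$; the second case requires the truncation function $\Phi_{\epsilon}$ of Lemma \ref{T. lemma}, the pointwise bound $H(s)\geq\sigma\Phi_{\epsilon}(s)-C|s|^{q+1}$ (resp. $-C|s|^{p-1}$), and a choice of $\sigma$ that exploits the smallness of $\|h\|_{k}$. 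Your sketch replaces this entire mechanism by an assertion at exactly the point where the failure of $(AR_{c})$ makes the standard argument collapse, so as written the proof does not go through. The rest of your proposal is sound and essentially the paper's (your $(S_{+})$ argument for strong convergence parallels the paper's use of the homeomorphism property of $-\Delta_{p}$, and your explicit check that the solutions satisfy $v>-1/\mu$ before inverting the change of variables is a point the paper actually passes over in silence).
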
 
  The paper is organized as follows. In Section \ref{Section 2} we recall some preliminary results and show that the functional $I$ has a geometrical structure. In Section \ref{Section 3} we prove our main result, Theorem \ref{main result}.
  	
  \section*{Notation}	
  Through this paper, we use the following notations.
  	\begin{enumerate}
  		\item [1)] The Lebesgue norm $(\int_{\Omega} |u|^{p})^{\frac{1}{p}}$ in $L^{p}(\Omega)$ is denoted by $\|.\|_{p}$ for $p\in [1,+\infty[$. The norm in $L^{\infty}(\Omega)$ is denoted by $\|u\|_{L^{\infty}(\Omega)}:=ess \sup_{x\in\Omega}|u(x)|$. The H\"{o}lder conjugate of $p$ is denoted by $p^{\prime}$.
  		\item [2)]The spaces $W_{0}^{1,p}(\Omega)$ and $W^{-1,p'}(\Omega)$  are equipped with  Poincaré norm $\|u\|:=(\int_{\Omega} |\nabla u|^{p})^{\frac{1}{p}}$ and the dual norm $\|\cdot\|_{\ast}:=\|\cdot\|_{W^{-1,p'}(\Omega)}$ respectively.
  		\item [3)]We denote by $B(0,R)$ the ball of radius $R$ centered at $0$ in $W_{0}^{1,p}(\Omega)$ and  $\partial B(0,R) $ its boundary.
  		\item[4)] We denote by $C_{i}, c_{i}>0$ any positive constants that are not essential in the arguments and that may vary from one line to another. 
  	\end{enumerate}
  	\section{Preliminaries and geometry of the functional $I$}\label{Section 2}
  	In this section, we recall the standard definitions of Palais-Smale sequence at the level $\tilde{c}$ and Palais-Smale condition at the level $\tilde{c}$ for $I$, and we prove that the functional $I$ defined  in $(\ref{func})$ has a geometrical structure.
  	
  	 Let us define  the  level at $\tilde{c}$ as follows
  	$$\tilde{c}= \inf_{\gamma\in \Gamma} \max_{t\in [0,1]} I(\gamma(t)),$$ 
  	where 
  	$\Gamma =\{\gamma\in C([0,1], W_{0}^{1,p}(\Omega)): \gamma(0)=0,\; \gamma(1)=v_{0}\}$ is the set of continuous paths joining $0$ and $v_{0}$, where $v_{0} \in W_{0}^{1,p}(\Omega)$ is defined  in Proposition \ref{goem structural} below.
  	\begin{definition}\label{ Palais-Smale sequence}
  		Let $E$ be a Banach space with dual space $E^{\ast}$ and  $(u_{n})$ is a sequence in $E$. We say that $(u_{n})$ is a Palais-Smale sequence at the level $\tilde{c}$ for $I$ if 
  		$$ I(u_{n}) \to \tilde{c},\; \; \mbox{ and }\; \; \|I^{\prime}(u_{n})\|_{E^{*}}\to 0.$$
  	\end{definition}
  	\begin{definition}\label{ Palais-Smale condition}
  	We say that $I$ satisfies the Palais-Smale condition at the level $\tilde{c}$  if any  Palais-Smale sequence at the level $\tilde{c}$ for $I$ possesses a convergent subsequence.
  	\end{definition}
   In order to prove that $I$  has a geometrical structure, we need some properties of $g$, which we gather in  the following lemma without proof
   
   \begin{lemma}\label{properties of g}
   		~
   	\begin{enumerate}\label{limites}
   		\item $\frac{g(s)}{|s|^{p-2}s} \to c$ as $s\to 0$, where $c=0$  if \;$q>p-1$ and $c=1$ if $q=p-1$.\label{ voisinage de zer0}
   		\item $\frac{g(s)}{|s|^{q-1}s} \to (p-1)^{q-p+1}$  as $s\to 0$, for all $q>0$. \label{limit at zero}
   		\item  $\frac{g(s)}{s^{p-1}}\to +\infty$  \, and \, $\frac{G(s)}{s^{p}} \to +\infty$  as  $s\to +\infty$, for all  $q>0.$ \label{limit as s goes to infinity}
   	\end{enumerate}
   \end{lemma}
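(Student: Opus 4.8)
The plan is to reduce all three statements to elementary asymptotics of $\ln(1+\mu s)$ at the two relevant points, $s=0$ and $s=+\infty$, after first rewriting $g$ in a sign-transparent form. Since the problem has been reduced to the case $\mu>0$, the domain of $g$ is $s>-1/\mu$, and on this domain $\ln(1+\mu s)$ has the same sign as $s$. Using the identity $|x|^{q-1}x=\operatorname{sgn}(x)\,|x|^{q}$ I would first record
\begin{equation*}
g(s)=\frac{(p-1)^{q-p+1}}{\mu^{q}}\,(1+\mu s)^{p-1}\,\operatorname{sgn}(s)\,\bigl|\ln(1+\mu s)\bigr|^{q},
\end{equation*}
which removes the piecewise behaviour and lets the remaining limits be read off directly.

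For the two limits at the origin (items 1 and 2) the key ingredient is $\ln(1+\mu s)\sim\mu s$ as $s\to 0$, so that $|\ln(1+\mu s)|/|s|\to\mu$ while $(1+\mu s)^{p-1}\to 1$. For item 2 I divide the displayed expression by $|s|^{q-1}s=\operatorname{sgn}(s)|s|^{q}$; the sign factors cancel and the quotient becomes $\tfrac{(p-1)^{q-p+1}}{\mu^{q}}(1+\mu s)^{p-1}\bigl(|\ln(1+\mu s)|/|s|\bigr)^{q}$, whose limit is $(p-1)^{q-p+1}$. For item 1 I divide instead by $|s|^{p-2}s=\operatorname{sgn}(s)|s|^{p-1}$; the same cancellation leaves $\tfrac{(p-1)^{q-p+1}}{\mu^{q}}(1+\mu s)^{p-1}|\ln(1+\mu s)|^{q}/|s|^{p-1}$, which is asymptotically $(p-1)^{q-p+1}|s|^{q-p+1}$. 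This tends to $0$ when $q>p-1$, and equals the constant $(p-1)^{q-p+1}=1$ in the limit when $q=p-1$, giving exactly the two cases $c=0$ and $c=1$.

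For the behaviour at $+\infty$ (item 3) I note that for $s>0$ one has $\operatorname{sgn}(s)=1$ and $(1+\mu s)^{p-1}/s^{p-1}\to\mu^{p-1}$, while $|\ln(1+\mu s)|^{q}=(\ln(1+\mu s))^{q}\to+\infty$; hence $g(s)/s^{p-1}\to+\infty$. To transfer this divergence to $G$, I would argue by comparison: given any $M>0$, the first divergence provides $s_{M}$ with $g(t)>M\,t^{p-1}$ for $t>s_{M}$, so integrating gives $G(s)\ge G(s_{M})+\tfrac{M}{p}(s^{p}-s_{M}^{p})$ and therefore $\liminf_{s\to+\infty}G(s)/s^{p}\ge M/p$; letting $M\to+\infty$ yields $G(s)/s^{p}\to+\infty$ (equivalently one may invoke L'Hôpital's rule, since $s^{p}\to+\infty$ and the ratio of derivatives is $g(s)/(p\,s^{p-1})\to+\infty$).

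None of the steps is genuinely hard; the only points requiring care are the uniform treatment of the two-sided limits at $s=0$, where tracking $\operatorname{sgn}(s)$ through the quotients is essential, and the passage from the pointwise divergence of $g(s)/s^{p-1}$ to that of $G(s)/s^{p}$, which is why I isolate the comparison/integration argument rather than relying on a bare Taylor expansion of $G$.
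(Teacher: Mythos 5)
Your proof is correct. Note that the paper states this lemma explicitly ``without proof,'' so there is no argument of the authors to compare against; your write-up supplies the missing verification, and it does so cleanly: the sign-normalized formula $g(s)=\frac{(p-1)^{q-p+1}}{\mu^{q}}(1+\mu s)^{p-1}\operatorname{sgn}(s)\,|\ln(1+\mu s)|^{q}$ (valid since $\mu>0$ after the paper's reduction, so $\ln(1+\mu s)$ and $s$ share a sign) makes items (1) and (2) immediate from $\ln(1+\mu s)\sim\mu s$, including the dichotomy $c=0$ versus $c=1$ via the factor $|s|^{q-p+1}$, and your comparison/integration step is exactly what is needed to pass from $g(s)/s^{p-1}\to+\infty$ to $G(s)/s^{p}\to+\infty$, since a bare pointwise statement about $g$ does not by itself control $G$.
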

   
   \begin{lemma} \label{sublinear of g}
   	
   	~
   	\begin{enumerate}
   			
   		\item If $q\geq p-1$, then we have 
   	\begin{equation*}\label{sublinear1}
   	  	|g(s)|\leq c_{0}|s|^{r}+c_{1}|s|^{p-1},
   	\end{equation*}
for all   $s>-\frac{1}{\mu},$  and for all $ r\in (p-1,p).$
\vspace*{1.5mm}
   	\item If $0<q< p-1$, then we have
   	\begin{equation*}\label{sublinear2}
   	   		|g(s)|\leq c_{1}|s|^{r}+c_{2}|s|^{q},
   	\end{equation*}
   	for all  $s>-\frac{1}{\mu},$  and for all $ r\in (p-1,p).$
   	\end{enumerate}
   \end{lemma}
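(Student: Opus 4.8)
The plan is to estimate $|g(s)|$ separately in three regimes --- near the singular endpoint $s=-\tfrac{1}{\mu}$, near the origin, and as $s\to+\infty$ --- and then to patch these estimates together by continuity on the compact pieces that remain. Recalling the reduction to $\mu>0$, the starting point is the explicit identity
\[
|g(s)| = \frac{(p-1)^{q-p+1}}{\mu^{q}}\,(1+\mu s)^{p-1}\,\bigl|\ln(1+\mu s)\bigr|^{q}, \qquad s > -\tfrac{1}{\mu},
\]
obtained from \eqref{function g} by absorbing $|\ln(1+\mu s)|^{q-1}\ln(1+\mu s)$ into $|\ln(1+\mu s)|^{q}$.

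Near the origin, part (2) of Lemma \ref{properties of g} gives $|g(s)|/|s|^{q}\to(p-1)^{q-p+1}$ as $s\to0$, so there exist $\delta\in(0,\tfrac{1}{\mu})$ and $C>0$ with $|g(s)|\le C|s|^{q}$ for $|s|\le\delta$. This is precisely where the two cases separate. If $q\ge p-1$, then on $|s|\le\delta$ we have $|s|^{q}=|s|^{q-(p-1)}|s|^{p-1}\le\delta^{\,q-(p-1)}|s|^{p-1}$, so the origin is absorbed into the $|s|^{p-1}$ term; if $0<q<p-1$, the term $|s|^{q}$ is retained. This accounts for the different second summands in the two statements.

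As $s\to+\infty$ the displayed quantity behaves like $C\,s^{p-1}(\ln s)^{q}$, and for any fixed $r\in(p-1,p)$ one has
\[
\frac{s^{p-1}(\ln s)^{q}}{s^{r}} = (\ln s)^{q}\,s^{(p-1)-r} \longrightarrow 0,
\]
since $(p-1)-r<0$. Hence there is $M>0$ with $|g(s)|\le|s|^{r}$ for all $s\ge M$, which furnishes the common $|s|^{r}$ summand. I expect this to be the crux: the condition $r>p-1$ is exactly what allows the logarithmic factor to be swallowed by an arbitrarily small power, and no smaller exponent would do.

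It remains to bound $g$ on the leftover set $(-\tfrac{1}{\mu},-\delta]\cup[\delta,M]$. The one delicate point is the approach to the singular endpoint: as $s\to-\tfrac{1}{\mu}^{+}$ we have $1+\mu s\to0^{+}$, and since a positive power dominates the logarithm, $(1+\mu s)^{p-1}|\ln(1+\mu s)|^{q}\to0$; thus $g$ extends continuously by the value $0$ at $s=-\tfrac{1}{\mu}$. Consequently $g$ is continuous, hence bounded by some $C$, on the compact set $[-\tfrac{1}{\mu},-\delta]\cup[\delta,M]$, while $|s|\ge\delta$ there forces $|s|^{p-1}$, $|s|^{q}$ and $|s|^{r}$ to stay above a positive constant. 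Dividing gives $|g(s)|\le C'|s|^{p-1}$ (resp. $C'|s|^{q}$) on this set. Collecting the three regimes and renaming constants yields the asserted inequalities for all $s>-\tfrac{1}{\mu}$. Equivalently, the whole argument can be recast as showing that $|g(s)|/(|s|^{r}+|s|^{p-1})$ (resp. $|g(s)|/(|s|^{r}+|s|^{q})$) is continuous on $(-\tfrac{1}{\mu},+\infty)$ with finite limits at $-\tfrac{1}{\mu}$, $0$ and $+\infty$, and therefore bounded.
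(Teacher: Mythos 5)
Your proof is correct, and its skeleton is the same as the paper's: both split $(-\tfrac{1}{\mu},+\infty)$ into a neighborhood of the origin controlled by the limits of Lemma \ref{properties of g}, a neighborhood of $+\infty$ where the power $|s|^{r}$ with $r>p-1$ absorbs $s^{p-1}(\ln s)^{q}$, and a remaining region on which $g$ is simply bounded and $|s|$ is bounded below. The differences lie in execution. Near zero, the paper invokes part (1) of Lemma \ref{properties of g} in the case $q\ge p-1$ to get $|g(s)|\le c_{1}|s|^{p-1}$ directly, while you use part (2) in both cases and absorb $|s|^{q}\le \delta^{\,q-(p-1)}|s|^{p-1}$ when $q\ge p-1$; these are equivalent. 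The substantive divergence is at the singular endpoint: the paper computes $g'$ explicitly and bounds $|g|$ on $(-\tfrac{1}{\mu},-\eta]$ by its value at the interior critical point $T=(e^{-q/(p-1)}-1)/\mu$, whereas you observe that $(1+\mu s)^{p-1}|\ln(1+\mu s)|^{q}\to 0$ as $s\to-\tfrac{1}{\mu}^{+}$, so $g$ extends continuously by $0$ to the closed interval and is bounded there by compactness. Your route is more elementary, avoiding the derivative computation and the monotonicity analysis that the paper leaves implicit; the paper's route yields an explicit constant $|g(T)|$ but nothing stronger. Both arguments then convert the bound on the leftover set into a power bound by dividing by $|s|\ge\eta$ (resp.\ $|s|\ge\delta$), so the conclusions coincide.
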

   \begin{proof}
   	By using  Lemma \ref{limites} (\ref{ voisinage de zer0}), there exists $\eta>0$ such that  for all $|s|<\eta$  we have
   	$$|g(s)|\leq c_{1}|s|^{p-1}.$$
   	Let $\delta\in (0,1)$. If $s\geq \eta$, then  we have
   	\begin{equation}\label{1.1 of Lemma 2.2}
   		g(s)\leq c_{2}(\eta,\mu, \delta)s^{p-1+\delta}.
   	\end{equation}
   	Moreover, simple calculation yield
   	$$g^{\prime}(s)=\frac{(p-1)^{q-p+1}}{\mu^{q-1}}(1+\mu s)^{p-2}|\ln(1+\mu s)|^{q-1}\left[ (p-1)\ln(1+\mu s)+q \right].$$ 
   	Now, if $-\frac{1}{\mu}<s\leq -\eta$, then we have
   	$|g(s)|\leq |g(T)|,$ where  $T=(e^{\frac{-q}{p-1}}-1)/\mu.$ Hence,
   	\begin{equation}\label{1.2 of Lemma 2.2}
   	|g(s)|\leq c_{3}(\eta,\mu, \delta)|s|^{p-1+\delta}.
   	\end{equation}
By combining (\ref{1.1 of Lemma 2.2}) and (\ref{1.2 of Lemma 2.2}),  (\ref{sublinear1}) holds. To prove the property (\ref{sublinear2}), we use   Lemma \ref{limites} (\ref{limit at zero}) and  the same previous argument.
   \end{proof}
  
   \begin{proposition} \label{goem structural} 
   	
   Assume that $(H)$ holds. If $\|c\|_{k}$ and $\|h\|_{k}$ are suitably small, then the functional $I$ has a geometrical structure, that is, $I$ satisfies the following properties
   	\begin{enumerate}
   		\item [i)] there exists $\rho>0$ such that for all $v$ in $\partial B(0,\rho)$,  $I(v)\geq \beta$, where $\beta>0$.  
   		\item [ii)] there exists  $v_{0}\in W_{0}^{1,p}(\Omega)$ such that   $\|v_{0}\|>\rho$ and  $I(v_{0})\leq 0$. 
   	\end{enumerate} 
   \end{proposition}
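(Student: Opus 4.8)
The plan is to verify the two conditions by entirely separate arguments: condition (i) is a statement about the behaviour of $I$ near the origin and will follow from the subcritical growth estimates for $g$ together with H\"{o}lder's and Sobolev's inequalities, whereas condition (ii) is a statement at infinity and will follow from the superlinear growth $G(s)/s^{p}\to+\infty$. Throughout I write $I(v)=\frac{1}{p}\|v\|^{p}-\int_{\Omega}c(x)G(v)-\int_{\Omega}h(x)F(v)$ and denote by $p^{*}=\frac{Np}{N-p}$ the critical Sobolev exponent; the smallness of $\|c\|_{k}$ and $\|h\|_{k}$ is what will make the nonlinear terms negligible on a fixed sphere.

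For (i), first I would integrate the bounds of Lemma \ref{sublinear of g}: in the case $q\geq p-1$ this gives $|G(s)|\leq C(|s|^{r+1}+|s|^{p})$ and in the case $0<q<p-1$ it gives $|G(s)|\leq C(|s|^{r+1}+|s|^{q+1})$, for a fixed $r\in(p-1,p)$; from the explicit form \eqref{f} of $f$ one gets $|F(s)|\leq C(|s|+|s|^{p})$. The crucial arithmetic point is that $k>\frac{N}{p}$ is exactly equivalent to $pk'<p^{*}$, so that $W_{0}^{1,p}(\Omega)\hookrightarrow L^{pk'}(\Omega)$; moreover the same inequality $k>\frac Np$ gives $p^{*}/k'>p$, which lets me fix $r\in(p-1,p)$ so close to $p-1$ that $(r+1)k'<p^{*}$ (and automatically $(q+1)k'<p^{*}$). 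Applying H\"{o}lder with exponents $k,k'$ and then these Sobolev embeddings to each term of $\int_{\Omega}|c|\,|G(v)|$ and $\int_{\Omega}|h|\,|F(v)|$, I obtain on the sphere $\|v\|=\rho$ an estimate of the form
$$I(v)\geq \frac{1}{p}\rho^{p}-\|c\|_{k}\,A(\rho)-\|h\|_{k}\,B(\rho),$$
where $A,B$ are fixed polynomials in $\rho$, with $A(\rho)=C(\rho^{r+1}+\rho^{p})$ (respectively $C(\rho^{r+1}+\rho^{q+1})$) and $B(\rho)=C(\rho+\rho^{p})$. I would then fix one value $\rho>0$ and invoke the smallness of $\|c\|_{k}$ and $\|h\|_{k}$ to force the two subtracted terms to be at most $\frac{1}{2p}\rho^{p}$, yielding $I(v)\geq\beta:=\frac{1}{2p}\rho^{p}>0$ on $\partial B(0,\rho)$.

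I expect the main obstacle to lie precisely in (i): since $f(0)=\frac{1}{(p-1)^{p-1}}\neq0$, the primitive $F$ is linear near the origin, so $B(\rho)$ contains a genuine term of order $\rho$, which dominates $\frac{1}{p}\rho^{p}$ as $\rho\to0$ because $p>1$. Hence (i) \emph{cannot} be proved by letting $\rho\to0$ in the customary way; the remedy is to reverse the order of the quantifiers, fixing $\rho$ first and only then shrinking $\|c\|_{k}$ and $\|h\|_{k}$, which is legitimate since the Mountain Pass geometry requires only the existence of \emph{some} pair $\rho,\beta$. One must also check that this fixed $\rho$ is compatible with (ii), but no conflict arises, as (ii) constrains neither $\|h\|_{k}$ nor the size of $\|c\|_{k}$, only the fact that $c\not\equiv0$.

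For (ii), I would choose $\phi\in W_{0}^{1,p}(\Omega)$ with $\phi\geq0$, $\phi\not\equiv0$, such that $\{\phi>0\}\cap\{c>0\}$ has positive measure (possible since $c\gneqq0$), and test with $v_{0}=t\phi$ for large $t>0$. Because $\mu>0$ and $\phi\geq0$ we have $t\phi\geq0>-\frac1\mu$, so $G(t\phi)$ is genuinely defined, and writing $\frac{1}{t^{p}}\int_{\Omega}cG(t\phi)=\int_{\Omega}c\,\phi^{p}\,\frac{G(t\phi)}{(t\phi)^{p}}$ and applying Fatou's lemma together with the superlinearity $\frac{G(s)}{s^{p}}\to+\infty$ from Lemma \ref{properties of g} shows that $t^{-p}\int_{\Omega}cG(t\phi)\to+\infty$. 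Since $|F(s)|\leq C(|s|+|s|^{p})$ keeps $t^{-p}\int_{\Omega}hF(t\phi)$ bounded, while $t^{-p}\cdot\frac1p\|t\phi\|^{p}=\frac1p\|\phi\|^{p}$ is constant, dividing $I(t\phi)$ by $t^{p}$ gives $I(t\phi)/t^{p}\to-\infty$. Thus $I(t\phi)\to-\infty$, and for $t$ large enough the choice $v_{0}=t\phi$ satisfies $\|v_{0}\|=t\|\phi\|>\rho$ and $I(v_{0})\leq0$, which completes the verification of the geometric structure.
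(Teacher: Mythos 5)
Your proof is correct and follows essentially the same route as the paper: the same growth bounds on $G$ and $F$ (Lemma \ref{sublinear of g} and inequality \eqref{growth of f}) combined with H\"{o}lder and Sobolev estimates for (i), with the same quantifier reversal (fix $\rho$ first, then require $\|c\|_{k}$ and $\|h\|_{k}$ small relative to it), and the same superlinearity argument $G(s)/s^{p}\to+\infty$ along a fixed positive test function for (ii). The only cosmetic differences are that the paper takes $\rho$ large with the smallness conditions written as explicit negative powers of $\rho$, whereas you fix an arbitrary $\rho$, and that your explicit identification of the $f(0)\neq 0$ obstruction and the Fatou justification in (ii) are points the paper leaves implicit.
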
 
   \begin{proof}
   	i)   To prove this lemma we distinguish two cases on $q$ . Firstly, if $0<q< p-1$, then by  using   Lemma \ref{sublinear of g} (\ref{sublinear2}) and H\"{o}lder's inequality,  we get 	
   	$$ \int_{\Omega}c(x)G(v)\leq c_{1}\|c\|_{k}\|v^{r+1}\|_{k^{\prime}}+ c_{2}\|c\|_{k}\|v^{q+1}\|_{k^{\prime}}.$$	
   	We choose $r> p-1$ with $r$ close to $p-1$  such that $(r+1)k^{\prime}< \frac{pN}{N-p}$, which exists due to the assumption $k>\frac{N}{p}$. Obviously, $(q+1)k^{\prime}< \frac{pN}{N-p}$. Thus,  by using  Sobolev's embedding we get	
   	
   	$$\int_{\Omega}c(x)G(v)\leq C_{1}\|c\|_{k}\|v\|^{r+1}+C_{2}\|c\|_{k}\|v\|^{q+1}.$$
   	Moreover, from the definition  of the function $f$ in (\ref{f}), we have 
   	\begin{equation}\label{growth of f}
   	|f(v)|\leq c(1+|v|^{p-1}), \mbox{ for some } c>0.
   	\end{equation} 
   	Using  Sobolev's embedding, we get
   	$$\int_{\Omega}h(x)F(v)\leq C_{3}\|h\|_{k}+C_{4}\|h\|_{k}\|v\|^{p}.$$	
   	By the definition   of $I$ in (\ref{func}), we deduce that
   	$$I(v)\geq \frac{1}{p} \|v\|^{p} -C_{1}\|c\|_{k}\|v\|^{r+1}-C_{2}\|c\|_{k}\|v\|^{q+1}-C_{3}\|h\|_{k}-C_{4}\|h\|_{k}\|v\|^{p}.$$
   	Now, let $v$ in $\partial B(0,\rho)$. Then, we have
   	$$I(v)\geq \frac{1}{p} \rho^{p}-\|c\|_{k}(C_{1}\rho^{r+1}+C_{2}\rho^{q+1})-\|h\|_{k}(C_{3}+C_{4}\rho^{p}).$$
   	We take $\rho$ sufficiently  large, and such that  $\|c\|_{k} \leq \rho^{-r-2+p}$ and  $\|h\|_{k}\leq \rho^{-1}$ \\(which are sufficiently small by hypothesis), then 
   	$$I(v)\geq \frac{1}{p}  \rho^{p} - C\rho^{p-1}\geq \rho^{p-1}\left(\frac{1}{p} \rho -  C\right)=\beta_{1}.$$
   	Secondly, that is $q\geq p-1$, we choose again $r$ as above such that $pk^{\prime}<(r+1)k^{\prime}< \frac{pN}{N-p}$. Then,  by using   Lemma \ref{sublinear of g} (\ref{sublinear1}) and Sobolev's embedding, we get
   	   	$$ \int_{\Omega}c(x)G(v)\leq c_{1}\|c\|_{k}\|v\|^{r+1}+ c_{2}\|c\|_{k}\|v\|^{p}.$$
   	   Now,  as the first case, we get 
   	   		$$I(v)\geq \frac{1}{p}  \rho^{p} - C^{\prime}\rho^{p-1}\geq \rho^{p-1}\left(\frac{1}{p} \rho -  C^{\prime}\right)=\beta_{2}.$$
   	   		Finally, we summarize the two cases and get
   	   			$$I(v)\geq \beta,\; \; \mbox{ where }\; \; \beta=\min(\beta_{1}, \beta_{2}).$$
   	ii) To prove the second property, we show that $I(tv)\to -\infty$ as $t\to +\infty$. For this, let $v \in C^{\infty}_{0}(\Omega) $ be a positive function such that $cv\gneqq0$. By the definition  of $I$ in (\ref{func}), we have  
   	\begin{align*}  	
   	I(tv)&=\frac{t^{p}}{p}\int_{\Omega}|\nabla v|^{p}-\int_{\Omega}c(x)G(tv)-\int_{\Omega}h(x)F(tv)\\
   	&=t^{p}\left(\frac{1}{p}\int_{\Omega}|\nabla v|^{p}-\int_{\Omega}c(x)\frac{G(tv)}{t^{p}v^{p}}v^{p}-\int_{\Omega}h(x)\frac{F(tv)}{t^{p}v^{p}}v^{p}\right).
   	\end{align*} 
   	From  inequality (\ref{growth of f}), we get 
   	$$\int_{\Omega}| h(x)\frac{F(tv)}{t^{p}v^{p}}v^{p}|\leq c \; \; \mbox{ as }\; \;  t\to +\infty.$$
   	Moerever, by   Lemma \ref{properties of g} (\ref{limit as s goes to infinity}), we get $$\int_{\Omega}c(x)\frac{G(tv)}{t^{p}v^{p}}v^{p}\to +\infty \; \; \mbox{ as }\; \;  t\to +\infty.$$
   	Thus, we deduce the desired result.
   \end{proof}
  Finally, we stress that since $I$ has a geometrical structure, then  the existence of a Palais-Smale sequence at the level $\tilde{c}$ for $I$  is ensured. This  can be observed directly from the proof given in (\cite{AR}), or alternatively using Ekeland's variational principle (\cite{Ekeland}).
  \section{Proof of Theorem \ref{main result}}\label{Section 3}
  Recall from introduction that the proof of our main result is divided into two steps as follows. In the first step, we show the existence of the first critical point for the $C^{1}$-functional $I$ by using the Mountain Pass Theorem due to Ambrosetti-Rabinowitz (\cite{AR}). Precisely,  we show that the functional  $I$ satisfies the  Palais-Smale  condition at the level $\tilde{c}$.  In the second step, we show the existence of the second critical point of $I$ on $B(0,\rho)$ (which is a local minimum) by using the lower semicontinuity argument. Moreover,   we are going to see that these  critical points are not the same.   Finally, we show that any solution of problem $(P)$ is bounded.

  \subsection{First critical point: Palais-Smale condition}\label{Palais-Smale condition}
  ~\newline

In this subsection, we prove that $I$ satisfies the  Palais-Smale  condition at the level $\tilde{c}$. Precisely, we show that any Palais-Smale sequence   at the level $\tilde{c}$ for $I$ is  bounded in $ W_{0}^{1,p}(\Omega)$, and then, it has  a strongly convergent subsequence.

 They key point  to prove the boundedness of the  Palais-Smale sequence at the level $\tilde{c}$ in  $W_{0}^{1,p}(\Omega)$, is to show that  $g$   verifies  the nonquadraticity condition at infinity $(NQ)$. Indeed, we have the following lemma 
\begin{lemma}\label{(NQ)}
	The function $g$ defined  in $(\ref{function g})$ verifies the nonquadraticity condition  at infinity ($NQ$); 
	$$(NQ)\ \;  \; \; \; H(s)=g(s)s-pG(s)\to +\infty, \;\mbox{ where }\; s\to +\infty. $$
\end{lemma}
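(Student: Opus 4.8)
The plan is to sidestep any closed-form evaluation of $G$ and instead study the derivative of $H$. Since $G'=g$, differentiating $H(s)=g(s)s-pG(s)$ gives
$$H'(s)=sg'(s)+g(s)-pg(s)=sg'(s)-(p-1)g(s).$$
My strategy is to show that $H'(s)\to+\infty$ as $s\to+\infty$; it then follows that $H'(s)\ge 1$ for all $s$ beyond some $s_{0}$, whence $H(s)=H(s_{0})+\int_{s_{0}}^{s}H'(t)\,dt\to+\infty$, which is exactly $(NQ)$.

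First I would restrict to $s>0$. After the reduction made in the introduction we have $\mu>0$, so $1+\mu s>1$ and $\ln(1+\mu s)>0$; the absolute values in \eqref{function g} therefore disappear and $g(s)=\frac{(p-1)^{q-p+1}}{\mu^{q}}(1+\mu s)^{p-1}\big(\ln(1+\mu s)\big)^{q}$. Inserting this, together with the expression for $g'$ already computed in the proof of Lemma \ref{sublinear of g}, into the formula for $H'$ above, and then substituting $w=\ln(1+\mu s)$ (so that $1+\mu s=e^{w}$, $s=(e^{w}-1)/\mu$, and $w\to+\infty$ as $s\to+\infty$), a short calculation factors $H'$ as
$$H'(s)=\frac{(p-1)^{q-p+1}}{\mu^{q}}\,e^{(p-1)w}\,w^{q-1}\Big[\,q-e^{-w}\big((p-1)w+q\big)\Big].$$

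Now the conclusion reads off directly: since $e^{-w}\big((p-1)w+q\big)\to 0$, the bracket tends to the positive constant $q$, while the prefactor $e^{(p-1)w}=(1+\mu s)^{p-1}\to+\infty$ (as $p>1$) dominates the logarithmic power $w^{q-1}$ in every regime, whether $q\ge 1$ or $0<q<1$. Hence $H'(s)\to+\infty$, and in particular $H'$ is bounded below by a positive constant for large $s$, giving $H(s)\to+\infty$.

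The point I expect to require care is the cancellation that makes this lemma nontrivial: to leading order both $g(s)s$ and $pG(s)$ behave like $(p-1)^{q-p+1}\mu^{p-1-q}\,s^{p}(\ln s)^{q}$, so a termwise asymptotic comparison of $g(s)s$ with $pG(s)$ is inconclusive and the sign of their difference is invisible at that order. Passing to $H'$ is what resolves this: it eliminates $G$ altogether and isolates the surviving positive constant $q$ inside the bracket, which is the genuine mechanism driving $H$ to $+\infty$. The only routine verification left is the algebraic simplification leading to the displayed form of $H'$, which is a direct substitution.
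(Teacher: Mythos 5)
Your proof is correct, and it departs from the paper's at the decisive step. Both arguments begin identically: the paper also differentiates $H$ and obtains the same formula, $H'(s)=C\mu s(1+\mu s)^{p-2}(\ln(1+\mu s))^{q-1}\left[q-(p-1)\tfrac{\ln(1+\mu s)}{\mu s}\right]$ with $C=(p-1)^{q-p+1}/\mu^{q}$, which is exactly your expression rewritten in the variable $w=\ln(1+\mu s)$. But the paper extracts from it only the weaker conclusion that $H$ is increasing for $s$ large; since an increasing function may remain bounded, it must then rule out boundedness by a second, independent argument: integrating $G$ by parts to write $H(s)=-\tfrac{C}{\mu}(1+\mu s)^{p-1}(\ln(1+\mu s))^{q}+qC\int_{0}^{s}(1+\mu t)^{p-1}(\ln(1+\mu t))^{q-1}\,dt$, dividing by $s^{\delta}$ with $\delta\in(p-1,p)$, and contrasting $H(s)/s^{\delta}\to+\infty$ with $M/s^{\delta}\to 0$ to reach a contradiction. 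You instead squeeze the full strength out of the derivative itself: the bracket tends to $q>0$ while the prefactor $e^{(p-1)w}w^{q-1}$ diverges (the exponential beats the power of $w$ whether $q\geq 1$ or $0<q<1$), so $H'(s)\to+\infty$, and integrating the bound $H'\geq 1$ beyond some $s_{0}$ finishes the proof in one line. Your route is shorter and more self-contained --- no integration by parts, no contradiction framing --- and still yields the explicit lower bound $H(s)\geq H(s_{0})+(s-s_{0})$; the paper's route, at the cost of the extra computation, exhibits the sharper growth $H(s)/s^{\delta}\to+\infty$ for every $\delta<p$, which is the kind of quantitative information one would need to verify the stronger condition $(CM)$ of Costa--Magalh\~aes rather than just $(NQ)$. (Incidentally, the paper's explicit-formula argument proves divergence on its own, so its monotonicity step is partly redundant; your observation that $H'\to+\infty$ makes the monotonicity computation carry the whole proof instead.)
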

\begin{proof}
	To prove ($NQ$), we show that $H$ is increasing and unbounded for $s$ sufficiently large. We recall that  $H(s)=g(s)s-pG(s)$. Then,  by  simple calculations, we get 
	$$H^{\prime}(s)=C\mu s(1+\mu s)^{p-2}(\ln(1+\mu s))^{q-1}[(1-p)\frac{\ln(1+\mu s)}{\mu s}+q],$$ 
	where $C=(p-1)^{q-p+1}/\mu^{q}.$
	Thus, $H$ is increasing for $s$ large enough. Moreover,  $H$ is unbounded. Indeed, by contradiction, if  $H$ is bounded, then  there exists a positive constant $M$ such that
	$$ \; \; H(s)\leq M,\; \; \mbox{ for s large enough}.$$
	In addition, from the definition of $H$ and using integration  by parts on $G$, we get 
	$$H(s)=-C\frac{1}{\mu}(\ln(1+\mu s))^{q}(1+\mu s)^{p-1}+qC\int_{0}^{s}(1+\mu t)^{p-1}(\ln(1+\mu t))^{q-1}dt.$$
	By choosing $\delta\in (p-1,p)$, we obtain 
	$$\frac{H(s)}{s^{\delta}}=-\frac{1}{\mu}\frac{(\ln(1+\mu s))^{q}(1+\mu s)^{p-1}}{s^{\delta}}+qC\frac{\int_{0}^{s}(1+\mu t)^{p-1}(\ln(1+\mu t))^{q-1}}{s^{\delta}}\leq \frac{M}{s^{\delta}}.$$
	When $ s\to +\infty$, we obtain  $\frac{H(s)}{s^{\delta}} \to +\infty$ and $ \frac{M}{s^{\delta}} \to 0$. Hence, we have a contradiction. As a conclusion, the function $g$ verifies $(NQ)$.
\end{proof}
\begin{lemma}\label{Cerami sequeq}
	 Let  $(u_{n})$ be a Palais-Smale sequence at the level $\tilde{c}$ for $I$ in $W_{0}^{1,p}(\Omega)$.  Then, $(u_{n})$  is bounded in $W_{0}^{1,p}(\Omega)$.	
\end{lemma}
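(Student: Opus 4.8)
The plan is to argue by contradiction. Suppose $t_n:=\|u_n\|\to+\infty$ and set $w_n:=u_n/t_n$, so that $\|w_n\|=1$ and, along a subsequence, $w_n\rightharpoonup w$ in $W_0^{1,p}(\Omega)$, $w_n\to w$ in $L^s(\Omega)$ for every $s<p^\ast=pN/(N-p)$, and $w_n\to w$ a.e. in $\Omega$. The engine of the proof is the nonquadraticity identity obtained by subtracting $\langle I'(u_n),u_n\rangle$ from $pI(u_n)$:
\begin{equation*}
\int_\Omega c(x)\,H(u_n)+\int_\Omega h(x)\,\Phi(u_n)=pI(u_n)-\langle I'(u_n),u_n\rangle,
\end{equation*}
where $H(s)=g(s)s-pG(s)$ is the function of Lemma \ref{(NQ)} and $\Phi(s):=f(s)s-pF(s)$. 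Since $I(u_n)\to\tilde c$ and $|\langle I'(u_n),u_n\rangle|\le\|I'(u_n)\|_\ast\,t_n=o(t_n)$, the right-hand side is of the form $p\tilde c+o(1)+o(t_n)$.

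Two ingredients control the two integrals on the left. For the $f$-term, a direct computation gives $\Phi(s)=\bigl(1-(1+\mu s)^{p-1}\bigr)/\bigl((p-1)^{p-1}\mu\bigr)$, which is bounded below and satisfies $|\Phi(s)|\le C(1+|s|^{p-1})$, in the spirit of $(\ref{growth of f})$; hence, by H\"older's inequality together with the Sobolev embedding (the hypothesis $k>N/p$ ensures $(p-1)k'<p^\ast$), one gets $\bigl|\int_\Omega h\,\Phi(u_n)\bigr|\le C\|h\|_k(1+t_n^{p-1})$, a term of lower order than $t_n^p$. For the $c$-term, extending $g\equiv f\equiv 0$ for $s\le-1/\mu$ (where both are already continuous with limit $0$) makes $H$ bounded below on $\mathbb R$; since $c\ge 0$, the integral $\int_\Omega c\,H(u_n)$ is bounded below. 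Combining these with the right-hand side above yields the upper estimate $\int_\Omega c\,H(u_n)\le C(1+t_n^{p-1})+o(t_n)$.

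Next I would exploit the growth of $H$ against this estimate. The proof of Lemma \ref{(NQ)} shows not merely that $H(s)\to+\infty$, but that $H(s)/s^\delta\to+\infty$ for any $\delta\in(p-1,p)$; I fix such a $\delta$ with $\delta>1$ (possible since $p>1$). If $\mathrm{meas}\bigl(\{w>0\}\cap\{c>0\}\bigr)>0$, then on this set $u_n=t_nw_n\to+\infty$, so $H(u_n)\ge u_n^{\delta}=t_n^{\delta}w_n^{\delta}$ for $n$ large, and Fatou's lemma gives $\int_\Omega c\,H(u_n)\ge \tfrac12\,t_n^{\delta}\int c\,w^{\delta}$ eventually, which dominates $t_n^{p-1}$ and $o(t_n)$ — contradicting the upper estimate. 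Hence $w\le 0$ a.e. on $\{c>0\}$. A parallel Fatou argument applied to $I(u_n)/t_n^p\to 0$, using the superlinearity $G(s)/s^p\to+\infty$ of Lemma \ref{properties of g}\,(\ref{limit as s goes to infinity}) and the finite limit of the $h$-term, confirms this and reduces matters to the vanishing regime $w=0$ on $\mathrm{supp}\,c$.

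The delicate point — and the true obstacle of the whole argument — is precisely this vanishing case, for two intertwined reasons: the leading terms of $g(s)s$ and $pG(s)$ cancel, so $(NQ)$ supplies no algebraic rate and a naive division by a fixed power of $t_n$ loses information (e.g. when $q<1$), while the \emph{sign-changing} $h$ deprives the $\Phi$-term of any favorable sign, forcing it to be absorbed purely through subcritical growth. Here I would invoke the Jeanjean rescaling/monotonicity trick: rescaling $u_n$ to a sphere of fixed radius $(pM)^{1/p}$ and using $w=0$ to annihilate the nonlinear terms shows $\max_{t\in[0,1]}I(tu_n)\ge M-o(1)$ for every $M$, so the maximum along the ray is attained at an interior $\tau_n\in(0,1)$ with $\langle I'(\tau_nu_n),\tau_nu_n\rangle=0$; feeding this into the identity and using the monotonicity of $H$ for large argument (established within the proof of Lemma \ref{(NQ)}) bounds $pI(\tau_nu_n)$ by the controlled right-hand side, contradicting $I(\tau_nu_n)\to+\infty$. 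In every case one reaches a contradiction, so $t_n\not\to+\infty$ and $(u_n)$ is bounded in $W_0^{1,p}(\Omega)$.
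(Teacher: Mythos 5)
Your opening is sound and genuinely close to what the paper needs: the identity $\int_\Omega c\,H(u_n)+\int_\Omega h\,\Phi(u_n)=pI(u_n)-\langle I'(u_n),u_n\rangle=O(1)+o(\|u_n\|)$, the computation of $\Phi$ with $|\Phi(s)|\le C(1+|s|^{p-1})$, the lower bound on $H$, and the Fatou argument using the rate $H(s)/s^{\delta}\to+\infty$ for $\delta\in(\max\{1,p-1\},p)$ correctly exclude a \emph{positive} blow-up profile on $\{c>0\}$. The first gap is that this is all your Fatou arguments give: after the Kazdan--Kramer substitution, $g$, $G$ and $H$ are \emph{bounded} on the negative half-line (indeed $g(s)\to 0$ as $s\to(-1/\mu)^{+}$, and your extension sets everything to $0$ below), so neither the $H$-argument nor the $G$-argument penalizes $w<0$; you obtain $w\le 0$ a.e.\ on $\{c>0\}$, not $w=0$. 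Worse, your rescaling step requires the nonlinear terms at $(pM)^{1/p}w_n$ to vanish, and since $w_n\to w$ strongly in $L^{pk'}(\Omega)$ the $h$-term converges to $\int_\Omega h\,F((pM)^{1/p}w)$, which is $0$ only if $w=0$ a.e.\ on $\{h\neq 0\}$ --- that is, you need $w\equiv 0$ on essentially all of $\Omega$, not merely on $\mathrm{supp}\,c$. So the "vanishing regime" you hand off to step 7 has not actually been reached.

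The second gap is decisive: the endgame does not close. At an interior maximizer $\tau_n$ you do have $pI(\tau_n u_n)=\int_\Omega c\,H(\tau_n u_n)+\int_\Omega h\,\Phi(\tau_n u_n)$ exactly, and monotonicity of $H$ at infinity does give the pointwise bound $H(\tau_n u_n)\le H(u_n)+C$, hence $\int_\Omega c\,H(\tau_n u_n)\le \int_\Omega c\,H(u_n)+C$. But the "controlled right-hand side" you then invoke is $p\tilde c+o(\|u_n\|)+\bigl|\int_\Omega h\,\Phi(u_n)\bigr|$, and because $h$ changes sign and $\Phi$ is unbounded below, $\bigl|\int_\Omega h\,\Phi(u_n)\bigr|$ is only $O(\|u_n\|^{p-1})$ (at best $o(\|u_n\|^{p-1})$ in the vanishing regime), and likewise $\bigl|\int_\Omega h\,\Phi(\tau_n u_n)\bigr|=O(\|\tau_n u_n\|^{p-1})$. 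All you can conclude is $pI(\tau_n u_n)\le C+o(\|u_n\|)+O(\|u_n\|^{p-1})$, whose right-hand side diverges as $\|u_n\|\to\infty$; this is perfectly compatible with $I(\tau_n u_n)\to+\infty$ and yields no contradiction. This is exactly where the sign-changing $h$ bites, and it is why the paper does not argue this way: in its hard case $t_n>2/\|u_n\|$ it compares $I(tu_n)/(t^p\|u_n\|^p)$ with $I(v_n)$ by integrating $\frac{d}{d\tau}\bigl(G(\tau u_n)/(\tau^p\|u_n\|^p)\bigr)$ along the ray, uses the pointwise inequality $H(s)\ge\sigma\Phi_\epsilon(s)-C|s|^{q+1}$, the uniformity in $n$ of the technical Lemma \ref{T. lemma}, and a choice of $\sigma>2^p(1+pC\|h\|_k)/\bigl((2^p-1)\int_\Omega c\bigr)$ to force $I(\tau_n u_n)<0$ for $n$ large, which is what actually contradicts $I(\tau_n u_n)\to+\infty$. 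Your one-line appeal to the monotonicity of $H$ is not a substitute for this mechanism; without it, the vanishing case --- which you yourself identify as the true obstacle --- remains open.
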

\begin{proof}
	Let $(u_{n})$ be a  Palais-Smale sequence at the level $\tilde{c}$ for $I$ in $W_{0}^{1,p}(\Omega)$. We prove by contradiction that $(u_{n})$ is bounded in $W_{0}^{1,p}(\Omega)$. We assume that $(u_{n})$ is unbounded in $W_{0}^{1,p}(\Omega)$, that is, $\|u_{n}\|\to +\infty$. \\
	 For all integer $n\geq 0$,  we define
	$$I(z_{n}):=\max_{0\leq t\leq1 }I(t u_{n}),\; \; \mbox{ where } \; \; z_{n}=t_{n}u_{n} \; \mbox{ and } t_{n}\in [0,1].$$
	We are going to  prove that $I(z_{n})\to +\infty$ and also  $(I(z_{n}))$ is bounded, which is  the  desired contradiction.
	
	\vspace{1em}
	\noindent {\bfseries a) Showing that \boldmath$I(z_{n})\to +\infty$} : \hspace*{0.5em}
		We set $v_{n}:= \frac{u_{n}}{\|u_{n}\|}$,  then $(v_{n})$ is bounded in $W_{0}^{1,p}(\Omega)$. Hence, there exists  a subsequence denoted again   $(v_{n})$ such that $v_{n}$ converges weakly and strongly  to $ v$ in $W_{0}^{1,p}(\Omega)$ and in  $L^{s}(\Omega)$  for some  $1\leq s<p^{\ast}$ respectively. Moreover, $v_{n}$ also  converges to  $v$ almost everywhere in $\Omega$. Recall that $p^{\ast}:=\frac{Np}{N-p}$, is Sobolev conjugate.
		
\vspace*{1.5mm}
\textit{Now, we claim by contradiction that  $v\equiv 0$  a.e. in $\Omega$. }

	\vspace*{1.5mm}
		Since $(u_{n})$ is Palais-Smale type sequence, then  we have
		\begin{equation}
					I(u_{n}) \to \tilde{c}\; \; \mbox{ and }\; \; \; \; \|I^{\prime}(u_{n})\|_{*}\to 0.
		\end{equation}
Hence,
		\begin{equation}\label{equation1}
		\int_{\Omega}|\nabla u_{n}|^{p-2}\nabla u_{n}\nabla \varphi- \int_{\Omega}c(x)g(u_{n})\varphi- \int_{\Omega}h(x)f(u_{n})\varphi = \epsilon_{n},
		\end{equation} 
		for all  $\varphi \in W_{0}^{1,p}(\Omega)$ and  for some $\epsilon_{n}\to 0$ as $n\to +\infty$. We  divide both sides  of (\ref{equation1}) by $\|u_{n}\|^{p-1}$, to obtain  
		\begin{equation}\label{v=0}
		\int_{\Omega}c(x)\frac{g(u_{n})}{\|u_{n}\|^{p-1}}\varphi=\frac{\epsilon_{n}}{\|u_{n}\|^{p-1}}+\int_{\Omega}|\nabla v_{n}|^{p-2}\nabla v_{n}\nabla \varphi+ \int_{\Omega}h(x)\frac{f(u_{n})}{\|u_{n}\|^{p-1}}\varphi.
		\end{equation}
		On the one hand, since $v_{n}$ converges weakly to $v$ in $W_{0}^{1,p}(\Omega)$ and by  the inequality (\ref{growth of f}), then     for $n$ large enough the second  and the third terms of the right-hand side of (\ref{v=0}) are bounded.\newline
		On the other hand, if $v \not\equiv 0$ in $\Omega$, then $cv\not\equiv 0$ in  $\Omega$. Now, we choose $\varphi \in W_{0}^{1,p}(\Omega)$ such that $cv\varphi>0$ in $\Omega_{\varphi}$
		and $cv\varphi\equiv0$ in $\Omega\backslash \Omega_{\varphi}$, with $|\Omega_{\varphi}|>0$. Since $v_{n}\|u_{n}\| = u_{n} $ in $\Omega$, then by using Lemma \ref{properties of g}  (\ref{limit as s goes to infinity}), we obtain 
		$$\liminf c(x)\frac{g(u_{n})}{\|u_{n}\|^{p-1}}\varphi=\liminf c(x)(v_{n})^{p-1}\frac{g(v_{n}\|u_{n}\|)}{(v_{n}\|u_{n}\|)^{p-1}}\varphi=+\infty \; \; \mbox{in }\; \; \Omega_{\varphi} .$$
		Hence, by  using the Fatou's lemma in $(\ref{v=0})$ we obtain the unbounded term in the left-hand side of $(\ref{v=0})$. Hence, the claim ($i.e$ $v\equiv 0$  a.e. in $\Omega$.)

		Since $\|u_{n}\|\to +\infty$, then there exists $M>0$ such that $\|u_{n}\|>M$, for   $n$ large enough. Moreover, we have 
		$$I(z_{n})\geq I\left(M \frac{u_{n}}{\|u_{n}\|}\right)=I(M v_{n})=\frac{M^{p}}{p}- \int_{\Omega}c(x)G(Mv_{n})-\int_{\Omega}h(x)F( Mv_{n})   .$$
		In what follows, we treat only the case $0<q< p-1$. The other case follows with similar arguments.  From  Lemma \ref{sublinear of g} (\ref{sublinear2}),  we have  $|G(s)|\leq c_{1}|s|^{r+1}+c_{2}|s|^{q+1}$, where $p-1<r<p$. Since $c \in L^{k}(\Omega),$ for some $ k>\frac{N}{p}$ and $v_{n}$ converges strongly  to $v$ in $L^{s}(\Omega)$ with $1\leq s<p^{\ast}$, then, we obtain 
		$$\int_{\Omega}c(x)G(Mv_{n})\to 0\; \mbox{ as } n\to +\infty,$$
		 due to  $v\equiv0  \mbox{ a.e. in } \Omega.$
		By H\"{o}lder's inequality, we get
		$$\int_{\Omega}h(x)F( Mv_{n})\leq C \;\mbox{ as }\; n\to +\infty.$$ 
		Hence, by choosing $M>0$ large enough,  we deduce that  $I(z_{n})\to +\infty$, as $n\to +\infty.$

		\vspace{1em}
		\noindent {\bfseries b) Showing that \boldmath$I(z_{n})$ is bounded} : \hspace*{0.5em}
		To prove that $(I(z_{n}))$ is bounded, we distinguish two cases:  $t_{n}\leq \frac{2}{\|u_{n}\|}$ and $t_{n}> \frac{2}{\|u_{n}\|}$. 
		
		%For the first case we treat only the case $0<q<p-1$ and other case we follow as the proof of Proposition \ref{goem structural} to obtain Sobolev embedding. For the second case we distinguish the cases on $q$.
		
		\vspace*{1em}
		\noindent\textit{The case $t_{n}\leq \frac{2}{\|u_{n}\|}$: }

		Here, we only handle the proof for $ q \in (0,p-1)$. The other case follows as in the proof of Proposition \ref{goem structural} i).
		By the definition of $(z_{n})$ and $I\in  C^{1}(W_{0}^{1,p}(\Omega),\mathbb{R})$, we have $\langle I^{\prime}(t_{n}u_{n}), t_{n}u_{n} \rangle=0$, which means that
		$$t^{p}_{n}\| u_{n}\|^{p}=\int_{\Omega}c(x)g(t_{n}u_{n})t_{n}u_{n}+\int_{\Omega}h(x)f(t_{n}u_{n})t_{n}u_{n}.$$
		By the definition  of $I$ in $(\ref{func})$, we have 
		\begin{equation}
		\begin{aligned}\label{CASE}
		pI(t_{n}u_{n})=& t^{p}_{n}\| u_{n}\|^{p} -p \int_{\Omega}c(x)G(t_{n}u_{n})-p\int_{\Omega}h(x)F(t_{n}u_{n})\\
		=& \int_{\Omega}c(x)H(t_{n}u_{n})+ \int_{\Omega}h(x)K(t_{n}u_{n}),
		\end{aligned}
		\end{equation}
		where the function  $H$ is defined  in $(NQ)$ and $K(s):=f(s)s-pF(s)$.
		Moreover,  from  Lemma \ref{sublinear of g} (\ref{sublinear2}), we have
		\begin{align*}
		\int_{\Omega}c(x)H(t_{n}u_{n})&\leq\int_{\Omega}\lvert c(x)\rvert\lvert g(t_{n}u_{n}) t_{n}u_{n}\rvert+p\int_{\Omega}\lvert c(x)\rvert\lvert G(t_{n}u_{n})\rvert\\
		&\leq c_1\int_{\Omega}\lvert c(x)\rvert\lvert t_{n}u_{n}\rvert^{r+1}+c_2\int_{\Omega}\lvert c(x)\rvert\lvert t_{n}u_{n}\rvert^{q+1}.
		\end{align*}
		By choosing $r$ and $q$ as in the proof of Proposition \ref{goem structural} i), we get
		\begin{equation}
		\begin{aligned}
		\int_{\Omega}c(x)H(t_{n}u_{n}) &\leq C_1\lVert c\rVert_{k}\lVert t_{n}u_{n}\rVert^{r+1}+C_2\lVert c\rVert_{k}\lVert t_{n}u_{n}\rVert^{q+1}.\\
		\end{aligned}\label{case1}
		\end{equation}
		By inequality (\ref{growth of f}) and  Sobolev's embedding, we get 
		\begin{equation}
		\begin{aligned}
		\int_{\Omega}h(x)K(t_{n}u_{n})&\leq\int_{\Omega}\lvert h(x)\rvert \lvert f(t_{n}u_{n})t_{n}u_{n}\rvert+p\int_{\Omega}\lvert h(x)\rvert \lvert(F(t_{n}u_{n})t_{n}u_{n}\rvert\\
		&\leq c_1\lVert h\rVert_{k}+c_2\lVert h\rVert_{k}\lVert t_{n}u_{n}\rVert+c_3\lVert h\rVert_{k}\lVert t_{n}u_{n}\rVert^{p}.
		\end{aligned}\label{case1.2}
		\end{equation}
		 Then, by $(\ref{CASE})$, $(\ref{case1})$, and $(\ref{case1.2})$, we obtain 
		$$I(t_{n}u_{n})\leq C,$$
		for all  $n\geq0,$ where $C$ is independent of $n$. Thus, $(I(z_{n}))$ is bounded, which  contradicts  the fact that $(I(z_{n}))$ is unbounded (see \textbf {a)}).
			
		\vspace*{1em}
		\noindent\textit{The case $t_{n}> \frac{2}{\|u_{n}\|}$: }

		  Here, we are proceeding the technique inspired by \cite{Marcelo}. To this end, we need the following technical lemma
		 \begin{lemma}\label{T. lemma}
		 	Let $\varPhi: \mathbb{R}\to \mathbb{R}$ the nonnegative function defined as
		 	$$ \varPhi(s)=
		 	\begin{cases}
		 	e^{-\epsilon/s^{2}},& \mbox{ if  } \; \; s\neq 0,\\
		 	0, & \mbox{ if } \; \;  s=0,
		 	\end{cases}$$
		 	with $\epsilon>0$. Then, we have
		 	\begin{enumerate}
		 		\item [i)] $\lim\limits_{s\to 0}\varPhi(s)=\lim\limits_{s\to 0}\varPhi^{\prime}(s)=0.$
		 		\item [ii)] 	for any positive function $z$ in $\Omega$ and $p>1$,\vspace*{-0.2em}
		 		$$ \lim\limits_{\epsilon \rightarrow 0}\int_{\Omega}\int_{s}^{t}\frac{z(x)}{\tau^{p+1}}\left(\dfrac{1-\Phi_{\epsilon}(|\tau u_{n}|)}{\|u_{n}\|^{p}}\right)d\tau dx=0,\;  \mbox{ uniformly in } n\in\mathbb{N}.$$
		 	 
		 	\end{enumerate}	
		 \end{lemma}
		 \begin{proof}
		 	Obviously we have i). To prove  ii), we  
		 	follow the same  approach given in \cite{Marcelo} for the case $p=2$ and $z(x)=1$, which can be immediately generalized for  any positive function $z$ and $p>1$.
		 \end{proof}	
		 	Now,  we resume the proof of Lemma \ref{Cerami sequeq}. From  Lemma \ref{(NQ)}, we have
			$H(s)\geq \sigma$, for $s$ large enough and some $\sigma>0$ (which will be chosen later). Moreover, if $0<q<p-1$, then from   Lemma \ref{limites} (\ref{limit at zero}), we have for $s$ sufficiently small, 
			$$H(s)\geq-C_{1}|s|^{q+1}.$$
			Then, by the continuity of $H$, we have  for all $s>-\frac{1}{\mu}$, 
			\begin{equation}\label{Brazil}
			H(s)\geq \sigma\Phi_{\epsilon}(s)-C_{2}|s|^{q+1}.
			\end{equation}
			Let  $0<s<t$, then we have
			\begin{align} \nonumber
			\dfrac{I(tu_{n})}{t^{p}\|u_n\|^{p}}-\dfrac{I(su_{n})}{s^{p}\|u_{n}\|^{p}}&=-\int_{\Omega}c(x)\left[\dfrac{G(tu_{n})}{t^{p}\|u_n\|^{p}}-\dfrac{G(su_{n})}{s^{p}\|u_n\|^{p}}\right]\\  \nonumber
			&-\int_{\Omega}h(x)\left[\dfrac{F(tu_{n})}{t^{p}\|u_n\|^{p}}-\dfrac{F(su_{n})}{s^{p}\|u_n\|^{p}}\right]\\ \label{3.8}
			&=\underbrace{-\int_{\Omega}c(x)\int_{s}^{t}\frac{d}{d\tau}\left(\dfrac{G(\tau u_{n})}{\tau^{p}\|u_n\|^{p}}\right)d\tau dx}_{A}\\ \nonumber &+\underbrace{\int_{\Omega}-h(x)\left[\dfrac{F(tu_{n})}{t^{p}\|u_n\|^{p}}-\dfrac{F(su_{n})}{s^{p}\|u_n\|^{p}}\right]}_{B}. 
			\end{align}
			Let us  handle the two  terms $A$ and $B$  respectively.
			\begin{align*}
			A&= -\int_{\Omega}\int_{s}^{t}c(x)\frac{\tau^{p}u_{n}g(\tau u_{n})-p\tau^{p-1}G(\tau u_{n})}{\tau^{2p}\|u_{n}\|^{p}}d\tau dx\\
			&=-\int_{\Omega}\int_{s}^{t}\frac{c(x)}{\|u_{n}\|^{p}}\frac{H(\tau u_{n})}{\tau^{p+1}}d\tau dx.
			\end{align*}
			By using $(\ref{Brazil})$, we get
			\begin{align}\label{A}
			A&\leq \int_{\Omega}\int_{s}^{t}\frac{c(x)}{\|u_{n}\|^{p}}\left(C_{2}\frac{|u_{n}|^{q+1}}{\tau^{p-q}}-\sigma\frac{\Phi_{\epsilon}(|\tau u_{n}|)}{\tau^{p+1}}\right)d\tau dx\\\nonumber 
			&\leq\int_{\Omega}\frac{c(x)}{\|u_{n}\|^{p}}\left(\frac{C_{2}}{p-q-1}\frac{|u_{n}|^{q+1}}{s^{p-q-1}}-\sigma\int_{s}^{t}\frac{\Phi_{\epsilon}(|\tau u_{n}|)}{\tau^{p+1}}d\tau\right)dx.
			\end{align}
				For the term $B$, we have
				\begin{align} \label{B}
				B&\leq C\left(  \int_{\Omega}|h(x)|\frac{(1+|tu_{n}|)^{p}}{t^{p}\|u_{n}\|^{p}}+\int_{\Omega}|h(x)|\frac{(1+|su_{n}|)^{p}}{s^{p}\|u_{n}\|^{p}}\right)\\ \nonumber
				&\leq C\left(\int_{\Omega}|h(x)|\left(\frac{1}{t_{n}\|u_{n}\|}+\frac{|u_{n}|}{\|u_{n}\|}\right)^{p}+\int_{\Omega}|h(x)|\left(\frac{1}{s_{n}\|u_{n}\|}+\frac{|u_{n}|}{\|u_{n}\|}\right)^{p}\right).
				\end{align}
				By setting  $s:=\frac{1}{\|u_{n}\|}$,  we obtain 
				\begin{align}  
				\dfrac{I(tu_{n})}{t^{p}\|u_n\|}&\leqslant \nonumber I(v_{n})+\int_{\Omega}c(x)\left(\frac{C_{2}}{p-q-1}|v_{n}|^{q+1}-\sigma\int_{s}^{t}\frac{\Phi_{\epsilon}(|\tau u_{n}|)}{\tau^{p+1}\|u_{n}\|^{p}}\right)d\tau dx\\ \nonumber
				&+C\left(\int_{\Omega}|h(x)|(\dfrac{1}{2}+|v_{n}|)^{p}+\int_{\Omega^{+}}|h(x)|(1+|v_{n}|)^{p}\right)\\ \nonumber
				&\leqslant I(v_{n})+C\left[\int_{\Omega}c(x)|v_{n}|^{q+1}+\int_{\Omega}|h(x)|+2\int_{\Omega}|h(x)||v_{n}|^{p}\right]\\ \nonumber
				&-\sigma\int_{\Omega}\frac{c(x)}{p}\left(1-\dfrac{1}{t^{p}_{n}\|u_{n}\|^{p}}\right)+\sigma\int_{\Omega}\frac{c(x)}{p}\left(1-\dfrac{1}{t^{p}_{n}\|u_{n}\|^{p}}\right) \\ \nonumber
				& \qquad -\sigma\int_{\Omega}\int_{s}^{t}c(x)\frac{\Phi_{\epsilon}(|\tau u_{n}|)}{\tau^{p+1}\|u_{n}\|^{p}}d\tau dx\\ \nonumber
				&\leqslant I(v_{n})+C\left[\int_{\Omega}c(x)|v_{n}|^{q+1}+\int_{\Omega}|h(x)|+2\int_{\Omega}|h(x)||v_{n}|^{p}\right]\\ \nonumber
				&-\sigma\int_{\Omega}\frac{c(x)}{p}\left(1-\dfrac{1}{t^{p}_{n}\|u_{n}\|^{p}}\right)
				-\sigma\int_{\Omega}\int_{s}^{t}\frac{c(x)}{\tau^{p+1}}\left(\dfrac{1-\Phi_{\epsilon}(|\tau u_{n}|)}{\|u_{n}\|^{p}}\right)d\tau dx.\\ \nonumber
				\end{align}
				By  the technical Lemma \ref{T. lemma}, we have  
				$$\lim\limits_{\epsilon \rightarrow 0}\int_{\Omega}\int_{s}^{t}\frac{c(x)}{\tau^{p+1}}\left(\dfrac{1-\Phi_{\epsilon}(|\tau u_{n}|)}{\|u_{n}\|^{p}}\right)d\tau dx=0,\; \; \mbox{ uniformly in } n\in\mathbb{N}.$$ 
				Then,
				\begin{align*}
				\dfrac{I(tu_{n})}{t^{p}\|u_n\|^{p}} &\leqslant \frac{1}{p}-\int_{\Omega}c(x)G(v_{n})-\int_{\Omega}h(x)F(v_{n})+C\left[\int_{\Omega}c(x)|v_{n}|^{q+1}\right.\\
				&\left.+\int_{\Omega}|h(x)| +2\int_{\Omega}|h(x)||v_{n}|^{p}\right]-\sigma\int_{\Omega}\frac{c(x)}{p}\left(1-\frac{1}{2^{p}}\right).
				\end{align*}
				We choose $\sigma$ such that
				$$\sigma>\dfrac{2^{p}(1+pC\|h\|_{k})}{(2^{p}-1)\int_{\Omega}c(x)}.$$
				which gives,
				$$\frac{1}{p} +C\|h\|_{k}-\sigma\int_{\Omega}\frac{c(x)}{p}\left(1-\frac{1}{2^{p}}\right)dx<0.$$   
				Since $v_{n}$ converges to $0$ almost everywhere in $\Omega$, weakly in $W^{1,p}_{0}(\Omega)$, and strongly in $L^{s}(\Omega)$ for some  $1\leq s<p^{\ast}$,
				then, we have 
				$$I(t_{n}u_{n})<0, \mbox{ in } \Omega \mbox{ for } n \mbox{ large enough} .$$
				Hence, $(I(z_n))$ is bounded. Therefore, this contradicts the fact that $(I(z_n))$ is unbounded (see \textbf{a)}).\\
				
				Now, If $q\geq p-1$, then from  Lemma \ref{limites} (\ref{ voisinage de zer0}) and the continuity of $H(s)$,  we have  for all $s>-\frac{1}{\mu}$,  
				\begin{equation}\label{Brazil2}
				H(s)\geq \sigma\Phi_{\epsilon}(s)-C_{1}|s|^{p-1}.
				\end{equation}
				Following the computations as in (\ref{3.8}), we find exactly the same terms $A$ and $B$. The term $B$ is handled as in (\ref{B}), whereas $A$ is handled as follows 
				\begin{align*}
				A&\leq \int_{\Omega}\int_{s}^{t}\frac{c(x)}{\|u_{n}\|^{p}}\left(C_{1}\frac{|u_{n}|^{p-1}}{\tau^{2}}-\sigma\frac{\Phi_{\epsilon}(|\tau u_{n}|)}{\tau^{p+1}}\right),\\
				&\leq \int_{\Omega}\int_{s}^{t}\frac{c(x)}{\|u_{n}\|^{p}}\left(C_{1}\frac{|u_{n}|^{p-1}}{s}-\sigma\frac{\Phi_{\epsilon}(|\tau u_{n}|)}{\tau^{p+1}}\right).
				\end{align*}
			Moreover, since $(p-1)k^{\prime}<pk^{\prime}<\frac{Np}{N-p}$, then by using Sobolev embedding, the rest of the proof is similar to the  case $q\in (0,p-1)$. Hence, we have also  the contradiction with the fact that $I$ is unbounded (see \textbf{a)}).
		\end{proof}
		To finish the proof of the Palais-Smale condition for $I$,   we only need to show the following lemma
		\begin{lemma}\label{Strongly converge}
			Any Palais-Smale sequence at the level $\tilde{c}$ of  $W_{0}^{1,p}(\Omega)$ has a strongly convergent subsequence.  	
		\end{lemma}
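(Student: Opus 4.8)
The plan is to combine the boundedness established in Lemma \ref{Cerami sequeq} with the strict monotonicity of the $p$-Laplacian. First I would invoke Lemma \ref{Cerami sequeq} to fix an arbitrary Palais-Smale sequence $(u_{n})$ at the level $\tilde{c}$ and, passing to a subsequence, obtain $u_{n}\rightharpoonup u$ weakly in $W_{0}^{1,p}(\Omega)$, $u_{n}\to u$ strongly in $L^{s}(\Omega)$ for every $1\leq s<p^{\ast}$, and $u_{n}\to u$ almost everywhere in $\Omega$. The aim is then to upgrade weak to strong convergence, and the natural device is the $(S_{+})$ property of $-\Delta_{p}$: it suffices to prove that
$$\int_{\Omega}\left(|\nabla u_{n}|^{p-2}\nabla u_{n}-|\nabla u|^{p-2}\nabla u\right)\cdot\nabla(u_{n}-u)\to 0,$$
since the classical Simon-type inequalities for the $p$-Laplacian then force $\|\nabla(u_{n}-u)\|_{p}\to 0$, i.e. $u_{n}\to u$ strongly in $W_{0}^{1,p}(\Omega)$.

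To reach the displayed limit, I would test the asymptotic Euler--Lagrange identity against $u_{n}-u$. Since $\|I^{\prime}(u_{n})\|_{\ast}\to 0$ and $(u_{n}-u)$ is bounded in $W_{0}^{1,p}(\Omega)$, we have $\langle I^{\prime}(u_{n}),u_{n}-u\rangle\to 0$, whence
$$\int_{\Omega}|\nabla u_{n}|^{p-2}\nabla u_{n}\cdot\nabla(u_{n}-u)=\langle I^{\prime}(u_{n}),u_{n}-u\rangle+\int_{\Omega}c(x)g(u_{n})(u_{n}-u)+\int_{\Omega}h(x)f(u_{n})(u_{n}-u).$$
The central step is to show that both nonlinear integrals tend to $0$. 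For the first, I would use the growth bound of Lemma \ref{sublinear of g}, which controls $|g(u_{n})|$ by $c_{1}|u_{n}|^{r}+c_{2}|u_{n}|^{q}$ (resp. $c_{0}|u_{n}|^{r}+c_{1}|u_{n}|^{p-1}$), together with $c\in L^{k}(\Omega)$, $k>\frac{N}{p}$, and the choice $r\in(p-1,p)$ made exactly as in Proposition \ref{goem structural} so that all the relevant products stay strictly below $p^{\ast}$ in summability; H\"{o}lder's inequality and the strong $L^{s}$-convergence of $(u_{n})$ then give $\int_{\Omega}c(x)g(u_{n})(u_{n}-u)\to 0$. For the second integral the growth estimate (\ref{growth of f}) on $f$ plays the same role. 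Finally, since $u_{n}\rightharpoonup u$ also yields $\int_{\Omega}|\nabla u|^{p-2}\nabla u\cdot\nabla(u_{n}-u)\to 0$, subtracting produces the required monotonicity limit.

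I expect the main obstacle to be precisely the verification that $\int_{\Omega}c(x)g(u_{n})(u_{n}-u)\to 0$: one must track the exponents so that the product of $c$, $g(u_{n})$ and $u_{n}-u$ is genuinely integrable with summability strictly inside the Sobolev range, and then exploit the \emph{strong} $L^{s}$-convergence (not merely the weak convergence) to annihilate the term --- this is where the hypotheses $k>\frac{N}{p}$ and $r<p$ are essential. Once the monotonicity limit is in hand, the passage to strong convergence is routine via the Simon inequalities, treating the ranges $1<p<2$ and $p\geq 2$ separately if needed, which completes the proof that $I$ satisfies the Palais-Smale condition at the level $\tilde{c}$.
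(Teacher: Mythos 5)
Your proposal is correct, but it takes a genuinely different route from the paper's proof. The paper never invokes the $(S_{+})$ property or the Simon inequalities: after extracting a subsequence with $u_{n}\rightharpoonup u$ in $W_{0}^{1,p}(\Omega)$ and $u_{n}\to u$ in $L^{s}(\Omega)$, it notes that $I^{\prime}(u_{n})\to 0$ means $-\Delta_{p}u_{n}-c(x)g(u_{n})-h(x)f(u_{n})\to 0$ in $W^{-1,p^{\prime}}(\Omega)$, so that $-\Delta_{p}u_{n}\to c(x)g(u)+h(x)f(u)$ strongly in $W^{-1,p^{\prime}}(\Omega)$, and then cites the theorem of Dinca--Jebelean--Mawhin (\cite{J. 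Mawhin}) that $-\Delta_{p}:W_{0}^{1,p}(\Omega)\to W^{-1,p^{\prime}}(\Omega)$ is a homeomorphism; continuity of $(-\Delta_{p})^{-1}$ then yields strong convergence of $(u_{n})$, and uniqueness of limits identifies the limit with $u$. Your argument replaces the appeal to this (nontrivial) global homeomorphism theorem by the classical monotonicity scheme: pairing $I^{\prime}(u_{n})$ with $u_{n}-u$, annihilating the two nonlinear integrals via Lemma \ref{sublinear of g}, the bound (\ref{growth of f}), H\"{o}lder's inequality with $k>\frac{N}{p}$ and $(r+1)k^{\prime}<p^{\ast}$, and the compact Sobolev embedding, then concluding with the Simon inequalities (with the usual case split $1<p<2$ versus $p\geq 2$). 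The trade-off is as follows: your route is more self-contained and only needs convergence of the specific pairings $\int_{\Omega}c(x)g(u_{n})(u_{n}-u)$ and $\int_{\Omega}h(x)f(u_{n})(u_{n}-u)$, whereas the paper's route needs the slightly stronger statement that the Nemytskii terms converge in the dual norm of $W^{-1,p^{\prime}}(\Omega)$ (a point the paper glosses over, but which follows from the same exponent bookkeeping you carried out); in exchange, once the homeomorphism theorem is granted, the paper's proof is essentially two lines. Both arguments rest on exactly the same integrability analysis of $c\,g(u_{n})$ and $h\,f(u_{n})$, which you have tracked correctly.
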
	
		\begin{proof}
			Let $(u_{n})$ be a Palais-Smale sequence at the level $\tilde{c}$, then $I^{\prime}(u_{n})\to 0$ in $W^{-1,p^{\prime}}(\Omega)$, which means that $$-\Delta_{p}u_{n}- c(x)g(u_{n})- h(x)f(u_{n})\to 0 \; \mbox{ in } \; W^{-1,p^{\prime}}(\Omega).$$
			By  Lemma \ref{Cerami sequeq},  $(u_{n})$ is bounded in $W_{0}^{1,p}(\Omega)$. Hence, $u_{n}$ converges weakly to $u$   in $W_{0}^{1,p}(\Omega)$ and strongly  in $L^{s}(\Omega)$  for some  $1\leq s<p^{\ast}$. Therefore,
			\begin{equation}\label{converge}
			-\Delta_{p}u_{n}\to c(x)g(u)+ h(x)f(u)\; \mbox{ in } \; W^{-1,p^{\prime}}(\Omega).
			\end{equation}
			We know that the operator $-\Delta_{p}: W_{0}^{1,p}(\Omega) \mapsto  W^{-1,p^{\prime}}(\Omega)$
			is a homeomorphism
			( \cite{J. Mawhin}). Hence, from $(\ref{converge})$ we get $$u_{n}\to (-\Delta_{p})^{-1}( c(x)g(u)+h(x)f(u)) \;\mbox{ in }\; W_{0}^{1,p}(\Omega).$$
			Therefore, by the uniqueness of the limit we have 
			$$u_{n} \to u, \mbox{ in } W_{0}^{1,p}(\Omega).$$ 	
		\end{proof}
		\subsection{Second critical point }
		~\newline
		
		In this subsection,  we use the geometrical structure of $I$ (see Proposition \ref{goem structural}) and  the standard lower semicontinuity argument,   we show the existence of the second critical point.    We state the result as follows
		\begin{theorem}
						Assume that $\|c\|_{k}$ and $\|h\|_{k}$ are suitably small to ensure Proposition \ref{goem structural}. Then, the functional $I$ possesses a critical point $v\in B(0,\rho)$ with $I(v)\leq 0$.
		\end{theorem}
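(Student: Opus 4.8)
The plan is to produce the second critical point as a global minimizer of $I$ over the closed ball $\overline{B(0,\rho)}$, via the direct method of the calculus of variations. The geometry furnished by Proposition \ref{goem structural} makes this strategy succeed almost for free: since $I(0)=0$ we have $\inf_{\overline{B(0,\rho)}}I\leq 0$, whereas part i) guarantees $I\geq \beta>0$ on the sphere $\partial B(0,\rho)$. Consequently any minimizer must avoid the boundary and lie in the open ball $B(0,\rho)$, where it is an unconstrained local minimum and hence a genuine critical point (by Fermat's rule, $I$ being $C^{1}$). This same inequality separates it from the Mountain Pass critical point, whose level $\tilde{c}\geq\beta$ is strictly positive.

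First I would check that $I$ is bounded below on $\overline{B(0,\rho)}$. This is immediate from the estimate already derived in the proof of Proposition \ref{goem structural} i), namely
$$I(v)\geq \frac{1}{p}\|v\|^{p}-C_{1}\|c\|_{k}\|v\|^{r+1}-C_{2}\|c\|_{k}\|v\|^{q+1}-C_{3}\|h\|_{k}-C_{4}\|h\|_{k}\|v\|^{p},$$
whose right-hand side is bounded for $\|v\|\leq\rho$. The heart of the argument is then to show that $I$ is sequentially weakly lower semicontinuous on $W_{0}^{1,p}(\Omega)$. The principal term $\frac{1}{p}\|v\|^{p}$ is convex and continuous, hence weakly lower semicontinuous. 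For the remaining terms I would establish that the maps $v\mapsto\int_{\Omega}c(x)G(v)$ and $v\mapsto\int_{\Omega}h(x)F(v)$ are weakly continuous. Taking $v_{n}\rightharpoonup v$ in $W_{0}^{1,p}(\Omega)$, the compact Sobolev embedding gives $v_{n}\to v$ strongly in $L^{s}(\Omega)$ for every $1\leq s<p^{\ast}$. Integrating the bounds of Lemma \ref{sublinear of g} yields $|G(s)|\leq C(|s|^{r+1}+|s|^{q+1})$ (respectively $|s|^{r+1}+|s|^{p}$), and $(\ref{growth of f})$ gives $|F(s)|\leq C(|s|+|s|^{p})$. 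Since $k>\frac{N}{p}$, the exponents satisfy $(r+1)k^{\prime},(q+1)k^{\prime},pk^{\prime}<p^{\ast}$ — precisely the choice made in Proposition \ref{goem structural} — so that $G(v_{n})\to G(v)$ and $F(v_{n})\to F(v)$ strongly in $L^{k^{\prime}}(\Omega)$. Combined with $c,h\in L^{k}(\Omega)$ and H\"older's inequality, this gives $\int_{\Omega}c\,G(v_{n})\to\int_{\Omega}c\,G(v)$ and likewise for the $h$-term, whence $I$ is weakly lower semicontinuous.

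With these ingredients the conclusion is routine. Let $m:=\inf_{\overline{B(0,\rho)}}I\leq 0$ and take a minimizing sequence $(v_{n})\subset\overline{B(0,\rho)}$. Being bounded, it admits a subsequence $v_{n}\rightharpoonup v$; as $\overline{B(0,\rho)}$ is closed and convex it is weakly closed, so $v\in\overline{B(0,\rho)}$, and weak lower semicontinuity gives $I(v)\leq\liminf I(v_{n})=m$, whence $I(v)=m\leq 0$. Finally, $m\leq 0<\beta\leq I(w)$ for all $w\in\partial B(0,\rho)$ forces $\|v\|<\rho$, so that $v$ lies in the open ball $B(0,\rho)$ and therefore $I^{\prime}(v)=0$.

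The main obstacle is the weak continuity of the two nonlinear integral terms, made delicate by the fact that $c$ and $h$ belong only to $L^{k}(\Omega)$ rather than $L^{\infty}(\Omega)$: one must carefully match the subcritical growth of $G$ and $F$ with the integrability threshold $k>\frac{N}{p}$, through H\"older's inequality and the compactness of the embedding $W_{0}^{1,p}(\Omega)\hookrightarrow L^{s}(\Omega)$ for $s<p^{\ast}$. Everything else — boundedness below, the convexity of the leading term, and the interior location of the minimizer — is a direct consequence of the geometry already recorded in Proposition \ref{goem structural}.
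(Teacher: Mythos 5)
Your proposal is correct and follows essentially the same route as the paper: minimization over the ball, weak compactness plus strong $L^{s}$ convergence ($s<p^{\ast}$) to pass to the limit in the terms $\int_{\Omega}c\,G(\cdot)$ and $\int_{\Omega}h\,F(\cdot)$, weak lower semicontinuity of the norm term, and the boundary estimate $I\geq\beta>0$ on $\partial B(0,\rho)$ to conclude the minimizer is an interior critical point distinct from the Mountain Pass one. If anything, you spell out two steps the paper leaves implicit (weak closedness of the ball and Fermat's rule at the interior minimizer), while the paper adds the unneeded refinement that the infimum is strictly negative when $h\not\equiv 0$.
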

		\begin{proof}
			Since $I(0)=0$, then $\inf_{v\in B(0,\rho)}I(v)\leq 0$. Moreover, if $h \not\equiv 0$, then we obtain that  $\inf_{v\in B(0,\rho)}I(v)<0$. Indeed, we choose $v\in C^{\infty}_{0}(\Omega)$ a positive function that satisfies $cv>0$ and $hv>0$. From the definition of $I$ in (\ref{func}), we have for $t>0$  
			\begin{equation}\label{Final equaltion}
						I(tv)=t^{p}\left(\frac{1}{p}\int_{\Omega}|\nabla v|^{p}-\int_{\Omega}c(x)\frac{G(tv)}{t^{p}v^{p}}v^{p}-\int_{\Omega}h(x)\frac{F(tv)}{t^{p}v^{p}}v^{p}\right).
			\end{equation}
			If $q\geq p-1$, then from   Lemma \ref{properties of g} (\ref{limit at zero}), we have $G(s)/s^{p}\to c<+\infty$ as $s\to 0^{+}$.
			If $0<q< p-1$, obviously, we have $G(s)/s^{p}\to +\infty$ as $s\to 0^{+}$. In addition, in both cases, we have $\frac{F(s)}{s^{p}}\to +\infty$ as $s\to 0^{+}$. Hence, by using these limits, we get from (\ref{Final equaltion})  that $I(tv)<0$ for $t>0$ small enough. \\
			
			Now, we set $m:=\inf_{v\in B(0,\rho)}I(v)$. Then,  by Proposition \ref{goem structural} i), we have $I(v)\geq \beta>0$ for $\|v\|=\rho$. Moreover, there exists a sequence  $(v_{n})\subset B(0,\rho)$ such that $I(v_{n})$ converges to $m$. Since $(v_{n})$ is bounded in $W^{1,p}_{0}(\Omega)$, then there exists a subsequence denoted again $(v_{n})$ such that $v_{n}$ converges to $v$ weakly in $W^{1,p}_{0}(\Omega)$  and strongly in $L^{s}(\Omega)$ for some  $1\leq s<p^{\ast}$ respectively. Hence, we get
			$$\int_{\Omega}h(x)F(v_{n})\to \int_{\Omega}h(x)F(v)  \; \; \mbox{and }\; \; \int_{\Omega}c(x)G(v_{n})\to \int_{\Omega}c(x)G(v)    \; \; \mbox{ as }\; \;  n\to +\infty.$$
			In addition, since $\|v\|^{p}\leq\liminf_{n\to\infty}\|v_{n}\|^{p}$, then  $I(v)\leq m=\inf_{v\in B(0,\rho)}I(v)$. Hence, we conclude that $v$ is a local minimum of $I$ in $B(0,\rho)$.

		\end{proof}
		\begin{remark}
			By the  subsection \ref{Palais-Smale condition}, $I$ has a critical point at the level $\tilde{c}$, that is, there exists $w$ in $W_{0}^{1,p}(\Omega)$ such that 
			$I(w)=\tilde{c}$ and $I^{\prime}(w)=0$. Since $I(w)=\tilde{c}>0\geq I(v)$, where $v\in B(0,\rho)$ is the second critical point given in the previous theorem, then $w$ is different from $v$. Hence, we have two distinct solutions for the problem $(P)$.
		\end{remark}
		\subsection{ Boundedness of solutions}
		~\newline
		
		Now, to finish the proof of our main result, it remains to show the boundedness of the solutions. Therefore, we show  the following result
		\begin{proposition}
			Any solution $u$ of the problem $(P^{\prime})$ belongs to   $L^{\infty}(\Omega)$. 
		\end{proposition}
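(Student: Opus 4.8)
The plan is to obtain the $L^{\infty}$ bound by a Stampacchia truncation (De Giorgi) argument, the essential point being that the right-hand side of $(P^{\prime})$ has \emph{subcritical} growth in $v$ with coefficients in $L^{k}(\Omega)$, $k>\tfrac{N}{p}$. Let $v\in W_{0}^{1,p}(\Omega)$ be a weak solution (recall $v>-\tfrac1\mu$ a.e., so that $g(v)$ is defined). First I would collapse the two nonlinear terms into a single growth estimate. Combining Lemma \ref{sublinear of g} with the bound (\ref{growth of f}) for $f$, and using $p-1<r<p$, one gets
\[
|c(x)g(v)+h(x)f(v)|\le a(x)\bigl(1+|v|^{r}\bigr),\qquad a:=C(|c|+|h|)\in L^{k}(\Omega),
\]
for a.e. $x$, where $r\in(p-1,p)$ is chosen exactly as in the proof of Proposition \ref{goem structural}, so that in addition $(r+1)k^{\prime}<p^{\ast}$ (with $p^{\ast}=\tfrac{Np}{N-p}$). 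Note that the genuine growth sits only at $+\infty$: as $s\to(-\tfrac1\mu)^{+}$ both $g$ and $f$ stay bounded, so the lower end contributes nothing.

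Next I would run the truncation. For $k>0$ set $A_{k}:=\{x\in\Omega:|v(x)|>k\}$ and test the weak formulation with (a height-$M$ truncation of) $G_{k}(v):=(|v|-k)^{+}\operatorname{sgn}v$, letting $M\to+\infty$ by monotone convergence so as to stay in the admissible class $W_{0}^{1,p}(\Omega)\cap L^{\infty}(\Omega)$. Since $\nabla G_{k}(v)=\nabla v\,\chi_{A_{k}}$, this yields the energy inequality
\[
\int_{A_{k}}|\nabla v|^{p}\le\int_{A_{k}}a(x)\bigl(1+|v|^{r}\bigr)\,|G_{k}(v)|.
\]
On the right I would split off the zeroth-order part $\int_{A_{k}}a\,|G_{k}(v)|$ and the superlinear part $\int_{A_{k}}a\,|v|^{r}|G_{k}(v)|$, estimating both by H\"older (using $a\in L^{k}$) followed by the Sobolev embedding $\|G_{k}(v)\|_{p^{\ast}}\le C\bigl(\int_{A_{k}}|\nabla v|^{p}\bigr)^{1/p}$. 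Writing $|v|\le k+|G_{k}(v)|$ on $A_{k}$, the superlinear part produces a term proportional to $|A_{k}|^{\,1/k^{\prime}-(r+1)/p^{\ast}}\,\|G_{k}(v)\|_{p^{\ast}}^{r+1}$; because $(r+1)k^{\prime}<p^{\ast}$ the power of $|A_{k}|$ is strictly positive and $|A_{k}|,\|G_{k}(v)\|_{p^{\ast}}\to0$ as $k\to\infty$, so for $k\ge k_{0}$ large this term is absorbed into the left-hand side.

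After absorption I would arrive at a closed recursive inequality for the super-level sets, obtained from $(h-k)^{p^{\ast}}|A_{h}|\le\int_{A_{k}}|G_{k}(v)|^{p^{\ast}}=\|G_{k}(v)\|_{p^{\ast}}^{p^{\ast}}$, namely
\[
|A_{h}|\le\frac{C}{(h-k)^{p^{\ast}}}\,|A_{k}|^{\beta},\qquad h>k\ge k_{0},\qquad \beta=\frac{p^{\ast}}{p-1}\Bigl(1-\tfrac1k-\tfrac1{p^{\ast}}\Bigr).
\]
The decisive computation is that $\beta>1$ is \emph{equivalent} to $1-\tfrac1k>1-\tfrac pN$, i.e.\ to the standing hypothesis $k>\tfrac Np$. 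Since the iteration levels $k_{n}=k_{0}+d(1-2^{-n})$ remain confined to the bounded interval $[k_{0},k_{0}+d]$, any factor of the form $k^{r}$ arising in the constant is controlled uniformly, and Stampacchia's iteration lemma gives $|A_{k_{0}+d}|=0$ for an explicit $d>0$; hence $\operatorname{ess\,sup}_{\Omega}|v|\le k_{0}+d<\infty$ and $v\in L^{\infty}(\Omega)$.

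I expect the main obstacle to be the superlinear growth of $g$: unlike the model case $p=2,\,q=1$, here $g$ admits no $|s|^{p-1}$ bound, so the forcing term is genuinely superlinear in $v$ and cannot simply be treated as a fixed $L^{m}$ datum. The resolution is the subcriticality $(r+1)k^{\prime}<p^{\ast}$ inherited from the choice of $r$ in Proposition \ref{goem structural}, which is precisely what permits the absorption step above. If one prefers to avoid absorption, the same subcriticality allows a preliminary bootstrap that raises the integrability of $v$ step by step until $c\,g(v)+h\,f(v)\in L^{m}(\Omega)$ for some $m>\tfrac Np$, after which the standard $L^{\infty}$ estimate for $-\Delta_{p}$ applies directly and yields the same conclusion.
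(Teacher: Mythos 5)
Your proof is correct, but it takes a genuinely different route from the paper. The paper does not run a truncation argument at all: it rewrites $(P^{\prime})$ as $-\Delta_{p}u=a(x)(1+|u|^{p-1})$ with
\[
a(x)=\frac{c(x)g(u)+h(x)f(u)}{1+|u|^{p-1}},
\]
estimates $|a(x)|\le C\left[|c(x)|\left(|u|^{r-p+1}+1\right)+|h(x)|\right]$ using the same growth bounds you invoke (Lemma \ref{sublinear of g} and (\ref{growth of f})), and then checks via H\"{o}lder's inequality, the Sobolev integrability of $u$, and $c,h\in L^{k}(\Omega)$ with $k>\frac{N}{p}$, that $a$ lies in a Lebesgue class good enough to invoke the regularity theorem of Pucci and Servadei \cite{pucci}; the $L^{\infty}$ bound itself is quoted, not reproved. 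You instead prove the bound from scratch by De Giorgi--Stampacchia iteration: the subcriticality $(r+1)k^{\prime}<p^{\ast}$, which in the paper serves only to verify the integrability of $a$, is exactly what lets you absorb the superlinear term $\int_{A_{k}}a\,|G_{k}(v)|^{r+1}$ into the left-hand side (using that $|A_{k}|\to0$ and $\|G_{k}(v)\|_{p^{\ast}}\to0$), and your exponent computation showing $\beta>1$ is equivalent to $k>\frac{N}{p}$ is correct. What the paper's route buys is brevity: one H\"{o}lder computation plus a citation. What yours buys is a self-contained argument, independent of \cite{pucci}, in which the role of the hypothesis $k>\frac{N}{p}$ becomes transparent; your closing remark on the bootstrap alternative is in fact the closest in spirit to what the paper does. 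Two small points to tidy: you use the letter $k$ simultaneously for the Lebesgue exponent and for the truncation levels, so the displayed formula for $\beta$ reads ambiguously; and the factor $k^{r}$ entering the constant of the recursion creates a mild circularity (the admissible interval $[k_{0},k_{0}+d]$ depends on the constant, which depends on $d$), which deserves one more line---for instance, bound the constant uniformly on $[k_{0},k_{0}+1]$ and note that $|A_{k_{0}}|\to0$ as $k_{0}\to\infty$ forces Stampacchia's increment $d$ below $1$ once $k_{0}$ is large.
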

		\begin{proof}
			If $|u|\leq 1$, it is over. Otherwise,  we begin by writing the problem $(P^{\prime})$ as follows  
			$$-\Delta_{p}u=a(x)(1+|u|^{p-1}),$$ 
			where
			$$a(x)=\frac{ c(x)g(u) +h(x)f(u)}{1+|u|^{p-1}}.$$
			Then, by Theorem 2.4 in \cite{pucci}, we can deduce the boundedness of $u$ if we show  that $a$ belongs to $ L^{\frac{p}{N(1-\epsilon)}}(\Omega)$, for some $\epsilon \in ]0,1 [$. Indeed, from (\ref{growth of f}) and  Lemma \ref{sublinear of g}, we obtain
			\begin{equation}\label{pucci}
			|a(x)| \leq C\left[|c(x)|(|u|^{r-p+1}+1)+|h(x)|\right].
			\end{equation}
			Let $m>1$ and $m^{\prime}$ it's conjugate.  By using  H\"{o}lder's inequality in  $(\ref{pucci})$, we obtain 	
			$$ 
			\int_{\Omega}|a(x)|^{\frac{p}{N(1-\epsilon)}} \leq C\left[\|c(x)^{\frac{p}{N(1-\epsilon)}}\|_{m}\|u^{(r-p+1)\frac{p}{N(1-\epsilon)}}\|_{m'}+\|h^{\frac{p}{N(1-\epsilon)}}\|_{m}+1\right].$$
			By choosing $0<\epsilon< 1-\frac{(N-p)(r-p+1)}{N^{2}}-\frac{p}{kN}$, we have 
			$$\frac{p}{N(1-\epsilon)}m\leq k \; \;\mbox{ and }\; \;  (r-p+1)\frac{p}{N(1-\epsilon)}m'<\frac{Np}{N-p}.$$
			Hence, the terms $\|c(x)^{\frac{p}{N(1-\epsilon)}}\|_{m}$, $\|h(x)^{\frac{p}{N(1-\epsilon)}}\|_{m}$, and  $\|u^{(r-p+1)\frac{p}{N(1-\epsilon)}}\|_{m'}$ are finite (recall that $c, h\in L^{k}(\Omega)$ for some $k>\frac{N}{p}$).  
		\end{proof}	
	\section*{Acknowledgments}
The author would like to express their gratitude for the
assistance and the encouragement provided by their Ph.D.
supervisor Professor Abderrahmane ELHACHIMI. They are
also grateful for their colleague Dr. Aymane EL FARDI for
the revision and the valuable suggestions for improving the
presentation of the paper.

	\end{document}